\theoremstyle{plain}
\newtheorem{theorem}{Theorem}[section]
\newtheorem{lemma}[theorem]{Lemma}
\newtheorem{corollary}[theorem]{Corollary}
\newtheorem{proposition}[theorem]{Proposition}
\newtheorem{example}[theorem]{Example}
\begin{document}

\title{Totally geodesic surfaces with arbitrarily many compressions}

\author{Pradthana Jaipong}\thanks{}

\address{Department of Mathematics \\ University of Illinois at Urbana-Champaign \\ 
1409 W. Green st. \\
Urbana, IL 61801}

\email{jaipong2@illinois.edu}

\maketitle

\begin{abstract}
A closed totally geodesic surface  in the figure eight knot complement remains incompressible in all but finitely many Dehn fillings. In this paper, we show that there is no universal upper bound on the number of such fillings, independent of the surface. This answers a question of Ying-Qing Wu.
\end{abstract}

\section{Introduction} \label{introsect}
Let  $M$ be a compact, connected, irreducible, orientable 3-manifold with torus boundary $\partial M$. A slope on $\partial M$ is an isotopy class of simple closed curves on $\partial M$. We use $\Delta(\alpha, \beta)$ to denote the absolute value of the algebraic intersection number between the slopes $\alpha$ and $\beta$.    It is shown in \cite{culler} that if $F$ is a closed, orientable, embedded, incompressible surface in $M$ with no incompressible annulus joining $F$ and $\partial M$, and $F$ compresses in the Dehn fillings $M(\alpha)$ and $M(\beta)$, then $\Delta(\alpha, \beta) \leq 2$. In \cite{wuincompress}, Wu improved this to $\Delta(\alpha, \beta) \leq 1$, and hence $F$ remains incompressible in  $M(\gamma)$ for all but at most three slopes $\gamma$. 

If one drops the assumption that $F$ be embedded, the previous theorem is not true;  see \cite{leiningercompress}. However, for hyperbolic $M$ such a surface $F$ can compress in at most finitely many Dehn fillings $M(\gamma)$;  see \cite{bart}. In fact in \cite{wudepth}, it is shown that there is a bound on the number of fillings in which $F$ can compress depending only on the genus of $F$, and not on the manifold $M$. Wu has asked whether there is any universal bound, independent of $F$,  for this number (Question 6.6 in \cite{wuimmersed}). In this paper, we prove that no such universal bound exists. More precisely we prove

\begin{theorem} \label{mainthm}
There exists a compact, connected, orientable, 3-manifold $M$, with torus boundary and hyperbolic interior having the following properties. Given any positive integer n, there exist $n$ distinct slopes $\alpha_{1}, ... , \alpha_{n}$ and infinitely many pairwise non-commensurable closed, orientable, immersed, incompressible  surfaces $F \looparrowright M$, with no incompressible annulus joining $F$ and $\partial M$, such that F compresses in $M(\alpha_{i})$ for all  $i=1, ... ,n$.
\end{theorem}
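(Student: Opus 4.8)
The plan is to take $M$ to be the figure eight knot complement and to build the surfaces $F$ from totally geodesic immersed surfaces already known to exist in $M$. The key observation is that Wu's finiteness result leaves room to manufacture many compressing fillings if one allows the surface to vary; so we want a family of immersed incompressible surfaces $F_k \looparrowright M$ whose associated set of "bad" slopes grows without bound, and then pass to a subsequence (using non-commensurability) realizing any prescribed $n$ slopes. Concretely, I would exploit the arithmetic of the figure eight knot complement: it is commensurable with $\mathrm{PSL}_2(\mathcal{O}_3)$, and it contains immersed totally geodesic surfaces coming from Fuchsian subgroups associated to the quadratic subfields and their conjugates. These surfaces lift to embeddings in finite covers, and on passing to the cover one can read off, via Culler--Shalen / boundary slope machinery, exactly which fillings compress them.

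\emph{Step 1: the candidate surfaces.} Fix a hyperbolic once-punctured surface group $\Gamma_0 < \mathrm{PSL}_2(\mathbb{R})$ with a single cusp, conjugate it into $\mathrm{PSL}_2(\mathcal{O}_3)$ so that its cusp matches the cusp of the figure eight knot group, and let the image surface be $S$. Twisting $\Gamma_0$ by powers of the parabolic fixing the cusp (Dehn twisting along the boundary, as in the Leininger compression examples cited in the excerpt) produces infinitely many immersed surfaces $F_k \looparrowright M$, all with one boundary component mapping to a slope on $\partial M$, which we then cap off or tube so that $F_k$ becomes closed; alternatively work with closed totally geodesic surfaces directly and Dehn twist along an embedded separating geodesic loop. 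The essential point, established by the standard theory of totally geodesic surfaces, is that each $F_k$ is incompressible in $M$ and admits no essential annulus to $\partial M$.

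\emph{Step 2: compressions after twisting.} Here is where the real content lies. For each fixed slope $\alpha$ with the appropriate slope length, the capped surface $F_k$ will compress in $M(\alpha)$ once the twisting parameter $k$ is large enough: in the filled manifold the core of the filling solid torus provides a disk along which the boundary curve of the twisted surface bounds, and the algebraic condition for compression is exactly an intersection-number inequality of Culler--Shalen type that is satisfied when $\Delta$ between the boundary slope of $F_k$ and $\alpha$ is bounded. By choosing the twisting so that the boundary slope of $F_k$ is a fixed slope $\alpha_0$ independent of $k$, all the $F_k$ compress in the same finite collection of fillings, and by instead letting the boundary slope vary with $k$ one gets the slopes $\alpha_1,\dots,\alpha_n$; I would arrange a hybrid so that a single family $\{F_k\}$ simultaneously compresses in $n$ prescribed distinct slopes once $k \gg 0$.

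\emph{Step 3: non-commensurability and conclusion.} Finally, one must extract infinitely many \emph{pairwise non-commensurable} surfaces from the family $\{F_k\}$. Since commensurability of the surface subgroups $\pi_1(F_k) < \pi_1(M)$ is detected by the invariant trace field or by the behavior of the Dehn-twist construction on the character variety, and since a fixed $3$-manifold group contains only countably many commensurability classes of Fuchsian subgroups with a common bound on covolume, a suitable subfamily is forced to have unbounded invariant trace field degree (or unbounded covolume), giving infinitely many distinct commensurability classes. Passing to that subfamily, and then to the finitely many slopes at which they all compress, yields the theorem with any desired $n$. \textbf{The main obstacle} I anticipate is Step 2: verifying that the Dehn-twisted surface genuinely compresses in the filled manifold — i.e., producing the compressing disk and checking that its boundary is essential on $F_k$ — rather than merely becoming non-$\pi_1$-injective for trivial reasons, and doing so uniformly across the $n$ chosen slopes while keeping control of the non-commensurability in Step 3.
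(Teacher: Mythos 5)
Your proposal identifies the right manifold (the figure eight knot complement) and the right class of surfaces (immersed totally geodesic ones coming from arithmetic Fuchsian subgroups), but it has a genuine gap exactly where you flag it: Step 2 is not an argument, and the mechanism you sketch would not work. Capping off or tubing a cusped surface, or Dehn-twisting along the cusp and then using the core of the filling solid torus to cap a boundary curve, presupposes that the surface meets $\partial M$ in a boundary slope; but the surfaces required by the theorem are closed and acylindrical (no essential annulus to $\partial M$), so there is no boundary curve to cap, and a capped-off cusped surface would in any case have to be shown incompressible and acylindrical in $M$, which is not automatic and is not how these examples arise. Moreover, the Culler--Shalen and Wu results you invoke are upper bounds on the number of compressing slopes for a fixed surface; they give no criterion guaranteeing that a given surface \emph{does} compress in a given filling, so "an intersection-number inequality of Culler--Shalen type that is satisfied when $\Delta$ is bounded" cannot produce the compressions. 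Finally, the claim that one can "arrange a hybrid so that a single family $\{F_k\}$ simultaneously compresses in $n$ prescribed distinct slopes" is precisely the content of the theorem and is asserted rather than proved; the non-commensurability argument in Step 3 is likewise only a hand-wave.

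The paper's actual route is quite different and worth contrasting. It takes Leininger's compression theorem as a black box: for coprime $(p,q)$ with $4 \mid p$, $3 \nmid p$, the closed totally geodesic surfaces $S_{D_k}$, with $D_k(p,q) = (p^2+12q^2)n_k^2 + 2 + 3k$, compress in $M_8(p/q)$; incompressibility, acylindricity, and pairwise non-commensurability of the $S_{D_k}$ come from arithmeticity ($\Gamma_D$ and $\Gamma_{D'}$ are commensurable iff $D = D'$). The new ingredient is number-theoretic, not topological: since $D_k(p,q)$ depends only on $k$ and the value $p^2 + 12q^2$, it suffices to find, for any $n$, a single integer $N$ with at least $n$ proper representations $N = p_i^2 + 12 q_i^2$ satisfying $4 \mid p_i$, $3 \nmid p_i$. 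This is done with Gauss's count of proper representations by reduced forms of discriminant $-48$, a Legendre symbol computation, and Dirichlet's theorem on primes $\equiv 7 \pmod{12}$, yielding $2^{\tau(N)} \ge n$ such pairs. Then the same surfaces $S_{D_k}$ compress in all $n$ fillings $M_8(p_i/q_i)$ simultaneously. Your proposal contains no analogue of this representation-counting step, which is the paper's essential idea; to repair your approach you would need either to supply an independent compression mechanism valid at $n$ prescribed slopes for a fixed closed acylindrical surface, or to adopt the arithmetic strategy above.
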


The manifold in Theorem \ref{mainthm} is $M_{8}$, the exterior of the figure eight knot in $S^3$. Our proof involves a careful analysis of a construction of closed, immersed, totally geodesic surfaces in $M_{8}$ which compress in $M_{8} (\gamma)$ for some specific $\gamma$. In particular, we inspect the proof of the following theorem from \cite{leiningercompress}.

\begin{theorem} \label{chris}
Suppose $ 4 \mid p$ and $3\nmid p$. Then for any $q$ that is relatively prime to $p$  there exists infinitely many non-commensurable, closed, immersed, totally geodesic surfaces in $M_{8}$ which compress in $M_{8} ( {{p}\over {q}} )$.
\end{theorem}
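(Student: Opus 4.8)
The plan is to build the totally geodesic surfaces from arithmetic data in $\PSL_2$ and then use a Dehn filling/compression criterion to control where they compress. Recall that $M_8 = \H^3/\Gamma$ where $\Gamma = \PSL_2(\mathcal{O}_3)$ has finite index in the Bianchi group for $\mathbb{Q}(\sqrt{-3})$, and that closed totally geodesic surfaces in $M_8$ arise from cocompact Fuchsian subgroups $\Delta < \Gamma$ stabilizing a geodesic plane; concretely, one takes $\Delta$ commensurable with $\PSL_2(\mathbb{Z}[\omega] \cap \mathbb{R}$-form$)$, i.e. with the group of units of an order in a quaternion algebra over $\mathbb{Q}$ split at infinity. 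The first step is therefore to exhibit, for each suitable parameter, a cocompact Fuchsian group $\Delta_k < \Gamma$ (say built from a quadratic form or a quaternion order depending on an integer $k$) whose fixed totally geodesic plane $P_k$ projects to a closed surface $S_k \looparrowright M_8$, and to check that as $k$ varies these surfaces are pairwise non-commensurable — the standard tool here is to compare invariant trace fields or the arithmetic invariants (discriminants) of the associated quaternion algebras, so that only finitely many $k$ give commensurable surfaces, and one passes to an infinite subfamily.

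The second step is to set up the compression criterion. A surface $S = \H^2/\Delta$ immersed in $M = \H^3/\Gamma$ compresses in the Dehn filling $M(\gamma)$ essentially when the (filled) surface group fails to inject, equivalently when there is an essential simple closed curve on $S$ whose image in $\pi_1(M(\gamma)) = \Gamma/\langle\langle \gamma \rangle\rangle$ becomes trivial (or more precisely, a curve that is peripheral-parabolic-like in $\Gamma$ and gets killed). The mechanism in Leininger's construction is that the totally geodesic plane $P$ meets the cusp cross-section, and the Fuchsian group $\Delta$ contains a parabolic (or an element) whose fixed point / translation data, when reduced modulo the filling slope, forces a loop on $S$ to bound a disk in $M(\gamma)$. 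So I would: (i) locate the cusp of $M_8$, normalized so that the peripheral subgroup is $\langle z \mapsto z+1,\ z\mapsto z+\omega\rangle$ with $\omega = e^{i\pi/3}$; (ii) arrange that the geodesic plane $P_k$ limits on a point whose associated peripheral translation lies in the subgroup of the cusp torus corresponding to killing the slope $p/q$; the divisibility hypotheses $4 \mid p$, $3 \nmid p$ are exactly what is needed for the relevant congruence conditions on the quadratic form / quaternion data to be solvable and for the plane to actually contain a suitable parabolic element of $\Gamma$.

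So the detailed outline is: (1) choose the quaternion algebra / quadratic form over $\mathbb{Q}$ parametrized by $q$ (with $p$ entering through a congruence), split at $\infty$ and ramified appropriately so its unit group embeds in $\Gamma$ as a cocompact Fuchsian group $\Delta$ stabilizing a plane $P$; (2) verify cocompactness (the form is anisotropic over $\mathbb{Q}$) and that $\Delta < \Gamma$ up to conjugacy and finite index (local embedding conditions / Borel's work on maximal arithmetic Fuchsian subgroups of $\PSL_2(\mathcal{O}_3)$); (3) show $P$ can be positioned so that $\Delta$, together with the part of $\Gamma$ that becomes trivial in $M_8(p/q)$, produces a compressing disk — i.e. an essential curve on $S = P/\Delta$ bounding in the filled manifold, using that $4\mid p$, $3\nmid p$ make the arithmetic congruences work; (4) run non-commensurability: distinct admissible parameters give quaternion algebras of distinct discriminant, hence non-commensurable $\Delta$'s, hence non-commensurable surfaces $S$, so infinitely many survive; and finally note there is no incompressible annulus joining $S$ and $\partial M$ since $S$ is totally geodesic (it is separated from the cusp, or such an annulus would contradict total geodesy / the geometry of the plane near the cusp). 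I expect the main obstacle to be step (3): pinning down exactly when the totally geodesic plane $P$ and the filling slope $p/q$ interact so as to give a genuine \emph{compression} — one must produce an essential simple closed curve on $S$, not merely a non-injective element of $\pi_1$, and show the compressing disk is essential in $M_8(p/q)$; this is where Leininger's argument does delicate work and where the precise form of the hypotheses on $p$ is used, so I would import that analysis rather than reprove it from scratch.
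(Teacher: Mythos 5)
The core of this theorem --- that the specific surfaces compress in $M_{8}(p/q)$ precisely under the hypotheses $4\mid p$, $3\nmid p$ --- is exactly the step you declare you would ``import rather than reprove,'' and that is a genuine gap: the statement being proved \emph{is} the compression result of \cite{leiningercompress}, so deferring step (3) to that reference leaves nothing proved. (The paper itself treats it as a quoted result and only records the precise input it needs, namely Theorem \ref{modify}: the explicit circles $\mathcal{C}_{D_{k}}$ centered at the origin with $n_{k}=-3(p^{2}+12q^{2})(2+3k)+9$ and $D_{k}=(p^{2}+12q^{2})n_{k}^{2}+2+3k$, whose stabilizers $\Gamma_{D_{k}}=Stab_{\Gamma_{8}}(\mathcal{C}_{D_{k}})$ are cocompact Fuchsian groups since $D_{k}\equiv 2\pmod{3}$, giving closed, acylindrical, totally geodesic surfaces that are pairwise non-commensurable because the $D_{k}$ are distinct, by Theorem \ref{comm}.) Moreover, the mechanism you sketch for the compression is not correct: these Fuchsian groups are cocompact, so they contain no parabolic elements, and the relevant planes do not ``meet the cusp cross-section'' in the way you describe --- indeed acylindricity says no element of $\Gamma_{D_{k}}$ is peripheral. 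The compression in \cite{leiningercompress} is obtained by showing that a hyperbolic element of $\Gamma_{D_{k}}$ becomes trivial in $\Gamma_{8}/\langle\langle \lambda^{p}\mu^{q}\rangle\rangle$, through explicit matrix computations tying the radius $D_{k}$ to the slope $p/q$; this is where $4\mid p$ and $3\nmid p$ actually enter, and it is not recovered by the congruence heuristics in your outline.

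There are also factual slips that would derail the plan if carried out literally: $\Gamma_{8}$ is an index-$12$ subgroup of $PSL_{2}(\mathcal{O}_{3})$, not the full Bianchi group, and the peripheral subgroup is generated by the knot's meridian and longitude, not by $z\mapsto z+1$, $z\mapsto z+\omega$. The surfaces are not produced by selecting an abstract quaternion order and invoking Borel-type embedding results, but concretely as $S_{D}=P_{D}/Stab_{\Gamma_{8}}(\mathcal{C}_{D})$ for circles of integer radius $D$ centered at the origin, with finite co-area from \cite{maclachlanfuchsian}; and the non-commensurability statement used (Theorem \ref{comm}) is commensurability of the subgroups inside $\Gamma_{8}$, holding if and only if $D=D'$, which is sharper and more directly applicable than distinguishing invariant quaternion algebras by discriminant. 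Your general framework (arithmetic Fuchsian stabilizers of geodesic planes, non-commensurability via arithmetic invariants, acylindricity from total geodesy) is in the right spirit, but without the explicit $D_{k}(p,q)$ construction and the trivialization argument in the filled group, the theorem is not established.
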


This paper is organized as follows: Section 2 contains a few definitions and constructions from 3-dimensional topology necessary for our work. We then give a brief review of  some basic definitions and facts concerning hyperbolic 3-manifolds in Section 3. Section 4 contains the various constructions of totally geodesic surfaces and theorems on compressing totally geodesic surfaces. Section 5 includes definitions and facts from number theory and quadratic forms, and we prove the main technical theorem needed for the proof of Theorem \ref{mainthm}. In Section 6, we prove Theorem \ref{mainthm}. \\

\noindent \textbf{Acknowledgments.}  The author is indebted to Professor Christopher  Leininger for key discussions and  for his careful reading of early versions of this manuscript. The author would like to thank Professor Kenneth Williams for his helpful suggestion on using quadratic forms. The author also appreciates the support from NSF during the Spring and Summer 2010. \\

\section{3-dimensional Topology} \label{3d}
In this section we recall some definitions and facts from 3-dimensional topology. For more details, see  \cite{benedetti, rolfsen, thurston}.

Let $M$  be a compact, orientable 3-manifold with a torus boundary $\partial M \cong T^2$ and let  $\pi_{1} (\partial M) \cong \pi_{1}(T^2) \cong \mathbb{Z} \oplus \mathbb{Z}$ be generated by $\lambda$ and $\mu$. A \textit{slope} on $\partial M $ is an isotopy class of simple closed curves on $\partial M$, and can be uniquely associated (up to inverses) with a primitive element  $\alpha = \lambda^{p} \mu^{q} \in \pi_{1} (\partial M)$. Primitivity implies $p, q$ are relatively prime and so the set slopes are in a one-to-one correspondence with $\mathbb{Q} \bigcup \{\infty\}$, where $ \lambda^{p} \mu^{q}$ corresponds to ${{p} \over {q}}$ in the lowest terms. We write $\alpha = {{p} \over {q}}$ (with $\infty = {{1} \over {0}}$). If  $\alpha = {{p} \over {q}}$ and $\sigma = {{r} \over {s}}$ are two slopes in $ \pi_{1} (\partial M)$, then the \textit{distance} between $\alpha$ and $\sigma$ is given by   $\Delta (\alpha, \sigma) =  | ps-qr |$. 

Now let $\alpha$ be a slope on $\partial M$, $S^1 \times D^2$ be a solid torus and $\mu_{0}= \{*\} \times \partial D^2$ be a meridional curve on $\partial(S^1 \times D^2)$. We form a closed 3-manifold by $\alpha$-\textit{Dehn filling} on $\partial M$ by attaching $S^1 \times D^2$ to $M$ identifying $\partial (S^1 \times D^2)$ with $\partial M$ so that $\alpha$ is identified with $\mu_{0}$. The resulting space,  denoted by $M(\alpha)$, is a closed 3-manifold depending only on $\alpha$ up to homeomorphism.

Let $F$ be a closed, connected, orientable surface which is not homeomorphic to a 2-sphere. We say that an immersion $f:  F \longrightarrow M$  is an \textit{incompressible surface} if the induced map $f_{*} : \pi_{1}(F) \longrightarrow \pi_{1}(M)$ is injective, and \textit{compressible}, otherwise.  A surface $f : F \longrightarrow M$ is \textit{essential} if it is incompressible and is not homotopic into $\partial M$. We say  $f : F \longrightarrow M$  is \textit{acylindrical} if no element of $f_{*}(\pi_{1}(F))$ is \textit{peripheral}, that is, conjugate into $\pi_{1}(\partial M)$. Equivalently, there is no annulus in $M$ joining a non-trivial loop in $F$ to a loop in $\partial M$.

The manifold $M$ we are interested in here will be the interior of compact manifold $\overline{M}$ with torus boundary. We will write $M(\alpha)$ for $\overline{M}(\alpha)$, and will refer to $\partial M$ for $\partial \overline{M}$. We will generally not distinguish between $M$ and $\overline{M}$ when no confusion arises.

The figure eight knot $K \subset S^{3}$ is the knot whose projection is shown in Figure \ref{eight}. The manifold we will analyze is $M_{8} = S^{3}-K$ which is the interior of a compact manifold with torus boundary. In the next two sections, we will describe this manifold in more detail.

\begin{figure}[htb]
\begin{center}
\begin{tikzpicture}[scale=.7]
\draw [very thick] (0,.2) .. controls +(.5,0) and +(0,-.5) .. (1,1)
  .. controls +(0,.5) and +(.5,-.25) .. (0.1,1.95)
  (-.1,2.05) .. controls +(-.5,.25) and +(0,-.5) .. (-1,3)
  .. controls +(0,.5) and +(-.5,-.25) .. (0,4)
   .. controls +(3,1) 
      and +(4,0) .. (1.1,1)
   (.9,1) .. controls +(-4,0) and +(3,0) .. (-1.5,1)
   .. controls +(-3,0)
     and +(-3,1) .. (-.1,4.05)
   (.1,3.95) .. controls +(.5,-.25) and +(0,.5) .. (1,3)
   .. controls +(0,-.5) and +(.5,.25) .. (0,2)
   .. controls +(-.5,-.25) and +(0,.5) .. (-1,1.1)
   (-1,.9) .. controls +(0,-.5) and +(-.5,0) .. (0,.2)
   ;
\end{tikzpicture}
\caption{The figure eight knot.}\label{eight}
\end{center}
\end{figure}
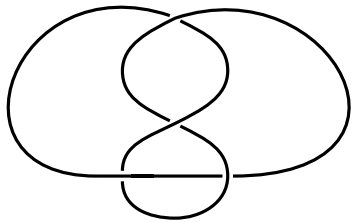

\section{Hyperbolic 3-manifolds} 
Here we review some of the background concerning hyperbolic 3-manifolds. See \cite{maclachlanarithmetic,  ratcliffe} for more details.

Let us  consider the upper half space $$\mathbb{H}^3 = \{ (z, t) \in \mathbb{C}\times \mathbb{R}  \; |  \; t > 0 \} $$  endowed with the complete Riemannian metric  
$$ds^2 = {{|dz|^2  + dt^2}\over{t^2}},$$ which is a model for hyperbolic $3$-space. The \textit{boundary at infinity} $\partial \mathbb{H}^3$ is $\mathbb{\widehat{C}} =  (\mathbb{C}\times \{0\}) \bigcup \{ \infty \}$. The group of all orientation-preserving isometries of $\mathbb{H}^3$ is isomorphic to  $PSL_{2} (\mathbb{C})$ acting by conformal extension of M\"obius transformation on $\mathbb{\widehat{C}}$.

 As a convention, since $PSL_{2} (\mathbb{C}) \cong SL_{2} (\mathbb{C}) /\pm I$, whenever we refer to a matrix for an element in $PSL_{2} (\mathbb{C}) $, we really mean one of the corresponding matrices in $SL_{2} (\mathbb{C})$ under the quotient homomorphism. A subgroup $\Gamma$ of  $PSL_{2} (\mathbb{C}) $  is said to be a \textit{Kleinian group} if the induced topology on $\Gamma$ is the discrete topology. Equivalently, $\Gamma$ acts properly discontinuously on $\mathbb{H}^3$.

  Throughout this paper, we will consider $\Gamma$ a torsion-free Kleinian group. Let $M_{\Gamma} = \mathbb{H}^3 / \Gamma$ be the quotient \textit{hyperbolic 3-manifold} with its induced metric, so we have $\pi_{1}(M_{\Gamma} ) \cong \Gamma$. We say that $\Gamma$ is \textit{co-compact} or has \textit{finite co-volume} if $M_{\Gamma}$ is compact or has finite total volume, respectively.   
   
 As is shown in \cite{riley} (see also  \cite{maclachlanarithmetic,  ratcliffe}),  $M_{8} \cong \mathbb{H}^3 / \Gamma_{8}$, and $\pi_{1}(M_{8}) \cong \Gamma_{8} $, 
where 
$$\Gamma_{8} = \left \langle \begin{pmatrix} 1 & 1 \\ 0 & 1 \end{pmatrix},    \begin{pmatrix} 1 & 0\\ -\omega & 1 \end{pmatrix} \right \rangle,$$
with $\omega = {{-1+ \sqrt{-3}} \over {2}}$. We note $\mathbb{Z} [\omega] = \mathcal{O}_{3}$, the ring of integers in the quadratic number field $ \mathbb{Q}(\sqrt{-3}) $. Furthermore $\Gamma_{8}$ has index 12 in  $PSL_{2} (\mathcal{O}_{3})$.

\section{Surfaces in hyperbolic 3-manifolds} 

In this section we collect some of definitions and facts concerning
 surface in hyperbolic 3-manifolds. See \cite{long,maclachlanarithmetic, thurston} for more details.

\subsection{Totally geodesic surfaces in  $M_{\Gamma} = \mathbb{H}^3 / \Gamma$ } 

All closed, orientable, immersed, totally geodesic surfaces in  $M_{\Gamma} = \mathbb{H}^3 / \Gamma$  arise as follows.

Let $\mathcal{C}$ be any circle in $\widehat{\mathbb{C}}$, that is, a circle or line in $\mathbb{C}$.  For any subgroup $\Gamma \subseteq PSL_{2} (\mathbb{C})$, define
 $$Stab_{\Gamma} (\mathcal{C}) = \{ g \in \Gamma \;  | \;  g (\mathcal{C}) = \mathcal{C} \text{ and } g  \text{ preserves the components of } \widehat{\mathbb{C}} \backslash \mathcal{C} \}.$$ 
 
For any circle $\mathcal{C}$, a discrete subgroup of $Stab_{PSL_{2} (\mathbb{C})} (\mathcal{C}) $ is called a \textit{Fuchsian group}. Because of the transitive action of $PSL_{2} (\mathbb{C})$  on circles in $\widehat{\mathbb{C}}$,  there exists $g \in PSL_{2} (\mathbb{C})$ such that $g(\mathcal{C}) = \widehat{\mathbb{R}} =  \mathbb{R} \bigcup \{\infty\}$, and hence $g Stab_{PSL_{2} (\mathbb{C})} (\mathcal{C}) g^{-1} = PSL_{2} (\mathbb{R})$. 

Any circle $\mathcal{C}$ in  $\widehat{\mathbb{C}}$ bounds a hyperbolic plane $P_{\mathcal{C}} \cong \mathbb{H}^2$ embedded totally geodesically in $\mathbb{H}^3$. If $\Gamma^{'} = Stab_{\Gamma} (\mathcal{C})$ is a torsion free Fuchsian group, we obtain a \textit{hyperbolic surface} $S_{\Gamma^{'}} =  P_{\mathcal{C}} / \Gamma^{'} $ with $\pi_{1} (S_{\Gamma^{'}}) \cong \Gamma^{'}$.

 Let $\Gamma$  be a finite co-volume torsion free Kleinian group such that there exists a circle $\mathcal{C} \subset \widehat{\mathbb{C}}$ for which $\Gamma^{'} = Stab_{\Gamma} (\mathcal{C})$ has finite co-area. One can check that this induces a proper totally geodesic incompressible immersion
 $$ S_{\Gamma^{'}} \cong  P_{\mathcal{C}}  / \Gamma^{'} \looparrowright M_{\Gamma}\cong \mathbb{H}^3 / \Gamma.$$ 

Let us write $\mathcal{C}_{D}$ to denote  a circle centered at the origin with radius $D \in \mathbb{Z^{+}}$. Consider the subgroup of $\Gamma_{8}$, 
$$\Gamma_{D} = Stab_{\Gamma_{8}} (\mathcal{C}_{D}) = \{\gamma \in \Gamma_{8}  \;|\;  \gamma (\mathcal{C}_{D}) = \mathcal{C}_{D} \},$$ which always has finite co-area \cite{maclachlanfuchsian}.

  We say $\Gamma_{D}$,  $\Gamma_{D^{'}}$ are  \textit{commensurable} if there exists an element $g \in \Gamma_{8}$ such that $(g\Gamma_{D}g^{-1}) \bigcap \Gamma_{D^{'}}$ is a finite index subgroup in both $g\Gamma_{D}g^{-1}$ and $\Gamma_{D^{'}}$. The following is a consequence of arithmeticity (see \cite{leiningercompress, reidthesis}).

\begin{theorem} \label{comm}
For each positive integer $D \equiv 2 \pmod{3}$, $\Gamma_{D}$ is a co-compact Fuchsian group and therefore $S_{D} = P_{D} / \Gamma_{D}  \looparrowright M_{8}$ is a closed totally geodesic surface, which is in particular acylindrical. Moreover, $\Gamma_{D}$ and $\Gamma_{D^{'}}$  are commensurable in $\Gamma_{8}$ if and only if $D = D^{'}.$
\end{theorem}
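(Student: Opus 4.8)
The plan is to derive all the assertions from the arithmetic of $\Gamma_8$ together with one quadratic form computation. First I would set up the arithmetic dictionary. Via the exceptional isomorphism $\PSL_2(\mathbb{C}) \cong \SO^{+}(3,1)$ coming from the action of $\PSL_2(\mathbb{C})$ on binary Hermitian forms, the group $\PSL_2(\mathcal{O}_3)$ — hence the finite-index subgroup $\Gamma_8$ — is identified with an arithmetic subgroup of $\SO(q)$, where $q$ is the rational form of signature $(3,1)$ given up to sign by the determinant of a Hermitian form: concretely $q(A,B,C) = N(B) - AC$ on the lattice $\{\,A,C\in\mathbb{Z},\ B\in\mathcal{O}_3\,\}$, with $N = N_{\mathbb{Q}(\sqrt{-3})/\mathbb{Q}}$ the norm form. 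Under this dictionary the circle $\mathcal{C}_D$ corresponds to a primitive lattice vector $v_D$ with $q(v_D) = D$, and $\Gamma_D = \Stab_{\Gamma_8}(\mathcal{C}_D)$ is, up to finite index, the $\Gamma_8$-stabilizer of $v_D$. This is precisely where arithmeticity enters, and I would quote \cite{maclachlanarithmetic,reidthesis,leiningercompress,maclachlanfuchsian} for it.

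For co-compactness, $\Gamma_D$ is commensurable with the arithmetic group $\SO(q_D)(\mathbb{Z})$, where $q_D = q|_{v_D^{\perp}}$ is a nondegenerate ternary rational form of signature $(2,1)$ — this, incidentally, recovers the stated fact that $\Gamma_D$ has finite co-area. A short computation gives $q_D \cong_{\mathbb{Q}} \langle 1,3,-D\rangle$. By Godement's criterion, $\SO(q_D)(\mathbb{Z})$ — hence $\Gamma_D$ — is co-compact if and only if $q_D$ is anisotropic over $\mathbb{Q}$, equivalently if and only if $D$ is not a value of $\langle 1,3\rangle$ over $\mathbb{Q}$, i.e.\ not a norm from $\mathbb{Q}(\sqrt{-3})$. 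By the Hasse norm theorem $D$ is a norm exactly when it is a norm everywhere locally; since $\mathbb{Q}(\sqrt{-3})/\mathbb{Q}$ is ramified only at $3$ and $\infty$, where the positive integer $D$ is automatically a local norm, this holds iff $v_p(D)$ is even for every prime $p$ inert in $\mathbb{Q}(\sqrt{-3})$, that is, for every $p \equiv 2 \pmod 3$. Finally, if $D \equiv 2 \pmod 3$ then $3 \nmid D$ and $D$ must contain a prime $p \equiv 2 \pmod 3$ to an odd power (otherwise $D$ would be $\equiv 0$ or $1 \pmod 3$); hence $q_D$ is anisotropic and $\Gamma_D$ is co-compact.

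Granting co-compactness, $\Gamma_D \leq \Gamma_8$ is torsion-free, so $S_D = P_D/\Gamma_D$ is a closed hyperbolic surface and the immersion $S_D \looparrowright M_8$ is totally geodesic and $\pi_1$-injective by the construction recalled above. For acylindricity, a torsion-free co-compact Fuchsian group has all nontrivial elements loxodromic, and a loxodromic element of $\Gamma_8$ is never conjugate into the parabolic peripheral subgroup $\pi_1(\partial M_8)$; hence no nontrivial element of $\Gamma_D$ is peripheral, which is the stated acylindricity.

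For the commensurability statement, if $D = D'$ then $\Gamma_D = \Gamma_{D'}$. Conversely, if $g\Gamma_D g^{-1} \cap \Gamma_{D'}$ has finite index in both for some $g \in \Gamma_8$, call it $H$: then $H$ is a finite-index, hence nonelementary, subgroup of the co-compact Fuchsian group $\Gamma_{D'}$, and it preserves both totally geodesic planes $g(P_D)$ and $P_{D'}$ of $\mathbb{H}^3$. A nonelementary subgroup of $\Isom^{+}(\mathbb{H}^3)$ preserving two distinct planes would preserve their intersection or common perpendicular — a point of $\mathbb{H}^3 \cup \partial\mathbb{H}^3$ or a geodesic — and hence be elementary, a contradiction; so $g(P_D) = P_{D'}$, whence $g \cdot v_D = \pm v_{D'}$ in the lattice and $D = q(v_D) = q(\pm v_{D'}) = D'$. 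I expect the main obstacle to be the first step: making the arithmetic dictionary precise — in particular the $\mathbb{Q}$-form of the exceptional isomorphism and the commensurability of $\Gamma_D$ with $\SO(q_D)(\mathbb{Z})$ — after which the form computation, the Hasse-norm analysis, and the congruence mod $3$ are all short.
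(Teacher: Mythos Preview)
The paper does not actually prove this theorem: it is quoted as a consequence of arithmeticity, with a pointer to \cite{leiningercompress, reidthesis}. Your proposal is a correct outline of precisely the argument one finds in those sources --- pass to the orthogonal picture via binary Hermitian forms over $\mathcal{O}_3$, identify $\Gamma_D$ up to commensurability with the integer points of $\SO(q_D)$ for the ternary form $q_D \cong_{\mathbb{Q}} \langle 1,3,-D\rangle$, and read off co-compactness from Godement's criterion together with anisotropy of $q_D$. The commensurability half (a nonelementary Fuchsian subgroup cannot preserve two distinct totally geodesic planes, so $g(P_D)=P_{D'}$ and hence $q(v_D)=q(v_{D'})$) is likewise the standard argument and is fine.

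One small slip: a positive integer $D$ is \emph{not} automatically a local norm from $\mathbb{Q}_3(\sqrt{-3})$ --- indeed $D\equiv 2\pmod 3$ is already a non-norm at $3$, since the local norm group in $\mathbb{Q}_3^{\times}$ consists of squares and $3$ times squares. Your biconditional (global norm $\iff$ $v_p(D)$ even at every inert $p\equiv 2\pmod 3$) is nonetheless correct, but the missing justification is the product formula for the Hilbert symbol, not any automaticity at $3$. This does not damage the conclusion; in fact the observation that $D\equiv 2\pmod 3$ already fails locally at $3$ gives a one-line proof of anisotropy, shorter than your detour through inert primes. Finally, note a convention mismatch: the paper writes ``radius $D$'' for $\mathcal{C}_D$, whereas $q(v_D)=D$ corresponds to radius $\sqrt{D}$ (the convention in the cited sources); this is harmless for your argument, which only needs $D\mapsto q(v_D)$ to be injective on positive integers.
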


\subsection{Compressing totally geodesic surfaces}\label{surface}

For the proof of the main theorem, we analyze the construction used in the proof of Theorem \ref{chris}. The details of its proof are in \cite{leiningercompress}.

The construction is started by considering $\pi_{1}(\partial {\overline M_{8}}) \cong \mathbb{Z} \oplus \mathbb{Z}$ which is generated by the standard meridian-longitude $\mu$ and $\lambda$. 

For any given integers $p$ and $q$ with $\gcd(p,q) = 1$, set $\sigma = \lambda^{p} \mu^{q} \in  \pi_{1}(\partial \overline{M_{8}})$. Van Kampen's theorem implies that $$ \pi_{1}\left( M_{8} \left( {{p}\over {q}}\right) \right)= \Gamma_{8} / \langle\langle \sigma\rangle\rangle$$ where $\langle\langle \sigma\rangle\rangle$ is the normal closure of $\{\sigma\}$ in $ \Gamma_{8} $. 

Now suppose $ 4 \mid p$,  $3\nmid p$. For any positive integer $k$, construct integers $n_{k}$, $D_{k}$ as follows:

$$\begin{array}{lclcl}
n_{k} & = & n_{k} (p,q) & = &  -3(p^{2}+12q^2)(2+3k)+9 \\
D_{k} & = & D_{k} (p,q) & = &   (p^{2}+12q^2){(n_{k})} ^{2}+2+3k.
\end{array}$$

	Define  a sequence $\{\Gamma_{D_{k}}\}^{\infty}_{k=1}$ of pairwise non-commensurable, co-compact Fuchsian subgroups $\Gamma_{D_{k}} = Stab_{\Gamma_{8}} (\mathcal{C}_{D_{k}})$ of $\Gamma_{8}$. From Theorem \ref{comm} we obtain  the sequence $ \{ S_{D_{k}} = P_{D_{k}} /  \Gamma_{D_{k}} \looparrowright M_{8} \}^{\infty}_{k=1}$ of pairwise non-commensurable, closed, orientable, immersed, totally geodesic surfaces in $M_{8}$ with  $\pi_{1} (S_{D_{k}}) \cong \Gamma_{D_{k}}$. 

We can now restate a more precise version of Theorem \ref{chris}, the main theorem of \cite{leiningercompress}.

\begin{theorem} \label{modify}
For any integer $p$ such that $4\mid p$,  $3\nmid p$ and $q$ relatively prime to $p$, let  $ \{ D_{k} \}_{k=1} ^{\infty} = \{ D_{k} (p,q) \}_{k=1} ^{\infty} $ be as above. Then for every $k$, the closed, immersed, totally geodesic surface  $ S_{D_{k}} \looparrowright M_{8}$ compresses in $M_{8} ( {{p}\over {q}} )$.
\end{theorem}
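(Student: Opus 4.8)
\medskip

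The plan is to exhibit, for each $k$, an explicit element of $\pi_1(S_{D_k}) \cong \Gamma_{D_k}$ which is nontrivial in $\Gamma_8$ but which dies in the quotient $\Gamma_8 / \langle\langle \sigma \rangle\rangle = \pi_1(M_8(p/q))$, where $\sigma = \lambda^p \mu^q$. Since $S_{D_k}$ is incompressible and acylindrical in $M_8$ by Theorem \ref{comm}, producing such an element shows precisely that the induced map $\pi_1(S_{D_k}) \to \pi_1(M_8(p/q))$ fails to be injective, i.e. $S_{D_k}$ compresses in $M_8(p/q)$. So the whole problem reduces to understanding the group $\Gamma_{D_k} = \Stab_{\Gamma_8}(\mathcal{C}_{D_k})$ concretely enough to locate an element whose image in the Dehn filling is trivial.

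\medskip

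The first step is to make $\Gamma_D = \Stab_{\Gamma_8}(\mathcal{C}_D)$ explicit. Using the given generators of $\Gamma_8$ and the fact that $\Gamma_8 < \PSL_2(\mathcal{O}_3)$, one identifies which matrices $\begin{pmatrix} a & b \\ c & d \end{pmatrix} \in \Gamma_8$ preserve the circle $\mathcal{C}_D$ of radius $\sqrt{D}$ (or $D$, per the normalization) centered at $0$; this is a condition coupling the entries via the quadratic form associated to the circle — concretely something like $|a|^2, |c|^2, a\bar b, c\bar d$ satisfying relations forcing an integral binary quadratic form of discriminant tied to $-D$ (or $-4D$). This is exactly the point of the formulas for $n_k$ and $D_k$: they are cooked so that $D_k = (p^2 + 12 q^2) n_k^2 + (2 + 3k)$ represents the relevant quantity, and so that a specific matrix built from $p$, $q$, $n_k$ lies in $\Gamma_{D_k}$. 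The second step is then to write down that matrix $g_k \in \Gamma_{D_k}$ explicitly in terms of $p, q, n_k$, verify $g_k \in \Gamma_8$ (the congruence conditions $4 \mid p$, $3 \nmid p$ enter here to guarantee membership in the index-$12$ subgroup $\Gamma_8$ rather than merely in $\PSL_2(\mathcal{O}_3)$, and to keep $D_k \equiv 2 \pmod 3$ so Theorem \ref{comm} applies), and verify $g_k \neq 1$.

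\medskip

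The third step is to show $g_k$ becomes trivial in $\Gamma_8 / \langle\langle \sigma \rangle\rangle$. The natural route: express $g_k$ as a word in $\Gamma_8$ that is conjugate to $\sigma^{\pm 1} = (\lambda^p \mu^q)^{\pm 1}$, or more generally that lies in $\langle\langle \sigma \rangle\rangle$, by exhibiting the peripheral subgroup $\pi_1(\partial M_8) = \langle \mu, \lambda\rangle$ concretely (with $\mu = \begin{pmatrix} 1 & 1 \\ 0 & 1\end{pmatrix}$ and $\lambda$ a suitable parabolic) and showing $g_k$ is conjugate into it, in fact to a power matching $\lambda^p \mu^q$. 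This is the step where the precise arithmetic of $D_k$ pays off — the $(2+3k)$ summand and the factor $p^2 + 12q^2 = \Delta(\lambda^p\mu^q, \cdot)$-type quantity are arranged so that the stabilizer $\Gamma_{D_k}$ contains an element representing exactly the filling slope up to conjugacy. I expect \textbf{this verification — that the chosen $g_k \in \Gamma_{D_k}$ lies in the normal closure of $\sigma$ — to be the main obstacle}, since it requires either a clean conjugacy computation in $\PSL_2(\mathcal{O}_3)$ or an appeal to the detailed geometric picture (how the totally geodesic plane $P_{D_k}$ meets a horoball neighborhood of the cusp, producing an annulus or disk after filling). Finally, one checks pairwise non-commensurability of the $\Gamma_{D_k}$ via the "only if" direction of Theorem \ref{comm}, which holds because the $D_k$ are distinct (strictly increasing in $k$), completing the proof. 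Since all of this is carried out in \cite{leiningercompress}, the proof here amounts to recalling and citing that construction with the explicit formulas above.
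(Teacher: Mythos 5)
The paper itself contains no proof of Theorem \ref{modify}: it is a precise restatement of the main theorem of \cite{leiningercompress}, with the formulas for $n_{k}$ and $D_{k}$ quoted from that construction, and it is used in this paper purely as a black box. So your overall framing --- compressibility of $S_{D_{k}}$ in $M_{8}(p/q)$ means exactly that some nontrivial element of $\Gamma_{D_{k}}$ maps to the identity in $\Gamma_{8}/\langle\langle\sigma\rangle\rangle$ --- and your ultimate reliance on \cite{leiningercompress} are consistent with how the paper treats this statement.

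However, the mechanism you call the ``natural route'' in your third step cannot work, and the obstruction is contained in Theorem \ref{comm}, which you yourself invoke: $\Gamma_{D_{k}}$ is a co-compact (torsion-free) Fuchsian group, so every nontrivial element is hyperbolic, while every nontrivial element of the peripheral subgroup $\langle\lambda,\mu\rangle\cong\mathbb{Z}\oplus\mathbb{Z}$ is parabolic; equivalently, acylindricity says precisely that no nontrivial element of $\Gamma_{D_{k}}$ is conjugate into $\pi_{1}(\partial M_{8})$. Hence no $g_{k}\in\Gamma_{D_{k}}$ can be ``conjugate to a power matching $\lambda^{p}\mu^{q}$,'' and an argument along those lines is self-contradictory. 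Your hedge --- that $g_{k}$ need only lie in $\langle\langle\sigma\rangle\rangle$ --- is the correct target, but it must be reached differently: in \cite{leiningercompress} the circle $\mathcal{C}_{D_{k}}$ is arranged (this is what the arithmetic in $n_{k}$, $D_{k}$ and the hypotheses $4\mid p$, $3\nmid p$ encode) so that it is preserved by a hyperbolic element of $\Gamma_{8}$ which factors as a product of parabolic elements, each individually conjugate in $\Gamma_{8}$ to $\sigma^{\pm 1}$; such a product is a nontrivial element of $\Gamma_{D_{k}}$ lying in $\langle\langle\sigma\rangle\rangle$, hence trivial after the filling, which is what gives the compression. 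Since your proposal offers no argument for this crucial membership beyond the impossible peripheral-conjugacy route and a pointer to the reference, the decisive step is missing if this is read as a standalone proof (though as a citation it matches the paper). A minor additional point: pairwise non-commensurability of the surfaces is not part of Theorem \ref{modify}; it comes from Theorem \ref{comm} (via $D_{1}<D_{2}<\cdots$) and is only used later, in the proof of Theorem \ref{semimain}.
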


We note that $D_{k} (p,q)$ depends only on $k$ and $p^{2}+12q^2$. Our approach to prove the main theorem is to show that for a given integer $n$, we can find at least $n$ ways to represent the form $ p^{2}+12q^2$, where $p, q$ satisfy the above hypothesis. More precisely, there exists a family $\{ (p_{i},q_{i}) \}_{i=1}^{m}$, where $m\geq n$, such that $ D_{k}( p_{i}, q_{i}) = D_{k}( p_{j}, q_{j}) $ for all ${i, j = 1,... , m}$ and all positive integer $k$. By Theorem \ref{modify}, there are infinitely many closed, immersed, totally geodesic surfaces  $ S_{D_{k}} \looparrowright M_{8}$ which compress in $M_{8} ( {{p_{i}}\over {q_{i}}} )$ for all $i=1,...,m$ and all positive integer $k$. To find such representations $\{ (p_{i},q_{i}) \}_{i=1}^{m}$, we will need some facts about quadratic forms.

\section{Quadratic forms} \label{quadraticsect}
In this section we recall the relevant facts from  number theory and basic properties of  Legendre symbol and quadratic forms  that will be important tools for the proof of the main theorem; (see \cite{Cox, lang} for more details).

For any integer $a$ and positive odd prime $p$,  the \textit{Legendre symbol} $\left( {{a}\over{p}}\right)$ is defined by

$$\left( {{a}\over{p}}\right)=
\begin{cases}  
\; 0, & \text{if }  p \mid a \\
+1, & \text{if } p \nmid a \text{ and there exists an integer $x$ such that $x^2 \equiv a\pmod{p}$} \\
-1, &  \text{otherwise}.
\end{cases}$$

We list here some well-known properties of the Legendre symbol we will need.

 \begin{proposition} \label{legendre}
 Let  $p, q$ be distinct, positive, odd primes, and $a, b$ be integers,
 \begin{enumerate}
  \item(Completely multiplicative law) $\left( {{ab}\over{p}} \right) = \left({{a}\over{p}}\right)\left({{b}\over{p}} \right). $
 \item (Quadratic reciprocity law)
 \subitem If $p \equiv 1 \pmod{4}$ or $q \equiv 1  \pmod{4}$ , then $\left({{p}\over{q}}\right)=\left({{q}\over{p}} \right)$.
  \subitem If $p \equiv q \equiv 3 \pmod{4}$, then $\left({{p}\over{q}}\right)= (-1) \left({{q}\over{p}} \right)$.
\item (First supplement to the quadratic reciprocity law) 
\subitem $\left( {{-1}\over{p}}\right)= 1$ \;  if and only if  \; $p \equiv 1\pmod{4}$.
\end{enumerate}
\end{proposition}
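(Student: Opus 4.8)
\textbf{Proof proposal for Proposition \ref{legendre}.}
The plan is to base everything on Euler's criterion, which states that for an odd prime $p$ and any integer $a$,
$$\left( \frac{a}{p} \right) \equiv a^{(p-1)/2} \pmod{p}.$$
I would first establish this: the multiplicative group $(\mathbb{Z}/p\mathbb{Z})^{\times}$ is cyclic of order $p-1$, so an element $a$ not divisible by $p$ is a square exactly when it lies in the unique subgroup of index $2$, which is precisely the kernel of $x \mapsto x^{(p-1)/2}$; since $a^{(p-1)/2}$ squares to $1$, it equals $\pm 1$, and it equals $+1$ iff $a$ is a quadratic residue. The case $p \mid a$ is immediate.

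Granting Euler's criterion, parts (1) and (3) are short. For (1), $\left( \frac{ab}{p} \right) \equiv (ab)^{(p-1)/2} = a^{(p-1)/2} b^{(p-1)/2} \equiv \left( \frac{a}{p} \right)\left( \frac{b}{p} \right) \pmod{p}$; both sides lie in $\{-1,0,1\}$ and $p \geq 3$, so they are actually equal. For (3), $\left( \frac{-1}{p} \right) \equiv (-1)^{(p-1)/2} \pmod{p}$, and again equality of integers in $\{\pm 1\}$ holds, so $\left( \frac{-1}{p} \right) = 1$ iff $(p-1)/2$ is even iff $p \equiv 1 \pmod 4$.

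For part (2), the quadratic reciprocity law, I would go through Gauss's Lemma: for $a$ coprime to the odd prime $p$, $\left( \frac{a}{p} \right) = (-1)^{\mu(a,p)}$, where $\mu(a,p)$ is the number of integers $j$ with $1 \le j \le (p-1)/2$ whose least positive residue of $ja$ modulo $p$ exceeds $p/2$. This follows by multiplying together the residues $\pm(ja \bmod p)$ chosen in $(-p/2,p/2)$ and comparing with $a^{(p-1)/2}\big((p-1)/2\big)!$ via Euler's criterion. Then I would run Eisenstein's lattice-point argument: for distinct odd primes $p,q$ one shows $\mu(q,p) \equiv \sum_{j=1}^{(p-1)/2} \lfloor jq/p \rfloor \pmod 2$, and the two sums $\sum_{j} \lfloor jq/p \rfloor$ and $\sum_{i} \lfloor ip/q \rfloor$ together count the lattice points strictly inside the rectangle $(0,p/2)\times(0,q/2)$, which has $\frac{p-1}{2}\cdot\frac{q-1}{2}$ of them (no lattice point lies on the diagonal $py=qx$ since $\gcd(p,q)=1$). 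Hence
$$\left( \frac{p}{q} \right)\left( \frac{q}{p} \right) = (-1)^{\frac{p-1}{2}\cdot\frac{q-1}{2}},$$
and splitting into the cases according to the residues of $p,q$ modulo $4$ gives the two stated subcases. The main obstacle is of course part (2): the counting identity and the passage from Gauss's Lemma to the floor-sum are the genuinely delicate bookkeeping, whereas (1) and (3) are essentially formal consequences of Euler's criterion.
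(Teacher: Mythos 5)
Your proposal is correct. The paper does not actually prove Proposition \ref{legendre}: it lists these as well-known properties and defers to the standard references (\cite{Cox, lang}), so there is no in-paper argument to compare against. Your route --- Euler's criterion for parts (1) and (3), then Gauss's Lemma together with Eisenstein's lattice-point count for quadratic reciprocity, followed by the parity case analysis of $\tfrac{p-1}{2}\cdot\tfrac{q-1}{2}$ modulo $2$ to recover the two stated subcases of (2) --- is precisely the classical textbook proof found in those references, and all the steps you sketch (including the observation that both sides of the multiplicativity identity lie in $\{-1,0,1\}$ and so congruence modulo $p \geq 3$ forces equality) are sound.
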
 

Using this, we prove the following.

\begin{lemma}\label{three}
For any prime $p$ greater than $3$, $\left( {3} \over {p} \right) = 1$ if and only if $p \equiv 1 \text{ or } 11\pmod{12}$.
\end{lemma}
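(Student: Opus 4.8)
The plan is to evaluate $\left(\frac{3}{p}\right)$ by combining quadratic reciprocity with the first supplement, and then translate the resulting congruence conditions modulo $4$ and modulo $3$ into a single condition modulo $12$. First I would dispose of the case $p = 3$ being excluded by hypothesis, so $p \geq 5$ and in particular $p \neq 3$, which guarantees $\left(\frac{3}{p}\right) = \pm 1$ and lets us apply reciprocity to the pair of distinct odd primes $3$ and $p$. Since $3 \equiv 3 \pmod 4$, the reciprocity law splits into two subcases according to the residue of $p$ modulo $4$: if $p \equiv 1 \pmod 4$ then $\left(\frac{3}{p}\right) = \left(\frac{p}{3}\right)$, while if $p \equiv 3 \pmod 4$ then $\left(\frac{3}{p}\right) = -\left(\frac{p}{3}\right)$.

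Next I would compute $\left(\frac{p}{3}\right)$ directly: the nonzero squares mod $3$ are just $\{1\}$, so $\left(\frac{p}{3}\right) = 1$ iff $p \equiv 1 \pmod 3$ and $\left(\frac{p}{3}\right) = -1$ iff $p \equiv 2 \pmod 3$. Feeding this into the two subcases gives $\left(\frac{3}{p}\right) = 1$ precisely when either $p \equiv 1 \pmod 4$ and $p \equiv 1 \pmod 3$, or $p \equiv 3 \pmod 4$ and $p \equiv 2 \pmod 3$. By the Chinese Remainder Theorem these two systems correspond to $p \equiv 1 \pmod{12}$ and $p \equiv 11 \pmod{12}$ respectively, which is exactly the claimed condition. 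The converse follows by reading the same equivalences backwards, noting that the other two residues coprime to $12$, namely $p \equiv 5$ and $p \equiv 7 \pmod{12}$, give $\left(\frac{3}{p}\right) = -1$.

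There is really no hard part here; the only thing to be careful about is correctly tracking the sign coming from the $p \equiv q \equiv 3 \pmod 4$ case of reciprocity, since $3$ itself is $3 \bmod 4$, and then bookkeeping the four residue classes mod $12$ that are coprime to $12$. One could alternatively cite the standard supplementary formula $\left(\frac{3}{p}\right) = (-1)^{\lfloor (p+1)/6 \rfloor}$, but the reciprocity computation above is self-contained given Proposition \ref{legendre} and is the route I would take.
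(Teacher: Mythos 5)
Your proof is correct and follows the same route as the paper: the paper's (terse) argument likewise applies Proposition \ref{legendre}(2) together with the observation that squares mod $3$ are $0$ or $1$, and checks the residue classes $1,5,7,11 \pmod{12}$. Your write-up simply makes that case analysis explicit.
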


\begin{proof} Any prime $p$ greater than 3 has $p \equiv 1, 5, 7$ or $11 \pmod{12}$. Given such $p$ apply Proposition \ref{legendre} part (2), and the fact that for any integer $x$, $x^2 \equiv $ 0 or 1$\pmod{3}$.
\end{proof}

	 A quadratic form $f(x,y)=ax^2+bxy+cy^2$ is called \textit{primitive} if its coefficients, $a, b$ and $c$ are relatively prime. We say an integer $m$ is \textit{represented} by $f(x,y)$ if the equation $m=f(x,y)$ has an integer solution. If this solution has $x$ and $y$ relatively prime, then we say that $m$ is \textit{properly represented} by $(x,y)$. We declare two primitive forms $f(x,y)$ and $g(x,y)$ to be  \textit{properly equivalent}, and write  $f(x,y) \sim g(x,y)$, if there exist integers $ p, q, r $ and $s$ such that $f(x,y) = g(px+qy, rx+sy)$ and $ps-rq = 1$. One can easily check that this defines an equivalence relation.	 
	 
	 The \textit{discriminant} of the form $f(x,y)=ax^2+bxy+cy^2$ is $\mathcal{D} =b^2-4ac$. Direct computation shows that if $f(x,y) = g(px+qy, rx+sy)$ , then $\mathcal{D} _{f}=(ps-qr)^2 \mathcal{D}_{g}$, where $\mathcal{D}_{f}$ and $\mathcal{D}_{g}$ are the discriminants of the forms $f$ and $g$, respectively. This implies that properly equivalent forms have the same discriminant. We restrict our discussion only to the case $\mathcal{D} <0$, and then $f$ is \textit{positive definite}. A primitive positive definite form $f(x,y)=ax^2+bxy+cy^2$ is said to be a \textit{reduced form} if $|b| \leq a \leq c$, and if $|b|=a$ or $a=c$ then $b\geq 0 $.
	 
	 Each equivalence class has a good representative quadratic form by Lagrange's Theorem of  Reduced Forms.

\begin{theorem} Every primitive positive definite form is properly equivalent to a unique reduced form.
\end{theorem}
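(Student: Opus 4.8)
The plan is to prove existence and uniqueness separately. For existence, I would start with an arbitrary primitive positive definite form $f(x,y) = ax^2 + bxy + cy^2$ and apply a sequence of unimodular substitutions to reduce it. The key observation is that among all forms properly equivalent to $f$, the set of values of the leading coefficient $a$ is a set of positive integers (positive definiteness), so it has a minimum; I would pass to a representative achieving this minimum $a$. Then, using the substitution $(x,y) \mapsto (x + ky, y)$ for a suitable integer $k$ — which fixes $a$ and changes $b$ to $b + 2ak$ — I can arrange $-a < b \le a$. At this point I claim $a \le c$: if not, the substitution $(x,y) \mapsto (-y, x)$ swaps the roles and gives an equivalent form with leading coefficient $c < a$, contradicting minimality of $a$. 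Finally, if $|b| = a$ or $a = c$ and $b < 0$, one more substitution (again $(x,y)\mapsto(x+y,y)$ in the first case, or $(x,y)\mapsto(-y,x)$ in the second) flips the sign of $b$ without leaving the allowed range, yielding a reduced form. Throughout I would check that each substitution is proper (determinant $1$) and preserves primitivity and the discriminant, which follows from the discriminant transformation formula already noted in the text.

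For uniqueness, the standard approach is to show that a reduced form $f(x,y) = ax^2+bxy+cy^2$ has $a$ equal to the smallest positive integer it properly represents (indeed the smallest nonzero value it represents at all), and more precisely to pin down exactly which pairs $(x,y)$ achieve the small values. The inequality $|b| \le a \le c$ gives, for $(x,y) \ne (0,0)$,
\[
f(x,y) = ax^2 + bxy + cy^2 \ge a(x^2 - |xy| + y^2) \ge a \cdot \tfrac{1}{2}(x^2+y^2) \ge \text{(something useful)},
\]
and a more careful case analysis on small $|x|, |y|$ shows $f(x,y) \ge a - |b| + c \ge c$ whenever $|x|,|y| \ge 1$, so the three smallest values represented are $a$ (at $(\pm 1, 0)$), $c$ (at $(0,\pm 1)$), and $a - |b| + c$. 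If two reduced forms are properly equivalent, the unimodular change of variables matching them must send the minimal-value vectors to minimal-value vectors; this forces the leading coefficients to agree, then the next-smallest values force $c$ to agree, then $|b|$ to agree, and the boundary conventions in the definition of "reduced" ($b \ge 0$ when $|b| = a$ or $a = c$) eliminate the remaining sign ambiguity. Care is needed with the extra automorphisms that occur exactly in the boundary cases $|b| = a$, $a = c$, or $a = b = c$ — these are precisely the situations the sign convention is designed to handle.

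I expect the main obstacle to be the uniqueness half, specifically the bookkeeping in the case analysis that identifies which vectors $(x,y)$ realize the smallest few values of a reduced form, together with the careful treatment of the boundary cases where the form has nontrivial proper automorphisms. The existence direction is essentially an algorithm and is routine once one verifies that minimality of the leading coefficient can be attained; the subtlety there is merely noting that positive definiteness makes the relevant set of leading coefficients a nonempty subset of the positive integers. Since this is a classical result (see \cite{Cox}), in the paper I would likely either cite it outright or give only the brief sketch above rather than the full case analysis.
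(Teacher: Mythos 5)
The paper offers no proof of this statement at all: it is quoted as Lagrange's classical theorem on reduced forms, with the section's opening references (\cite{Cox, lang}) standing in for a proof. Your sketch is the standard argument from those sources, and it is correct in outline: the existence half (normalize $b$ into $(-a,a]$ by $(x,y)\mapsto(x+ky,y)$ after minimizing the leading coefficient, swap via $(x,y)\mapsto(-y,x)$ if $a>c$, and fix the boundary sign of $b$) is essentially complete, and your uniqueness plan --- identifying $a$, $c$, and $a-|b|+c$ as the smallest values properly represented by a reduced form and tracking which vectors achieve them --- is the right mechanism, with the genuinely delicate part being exactly the boundary cases $|b|=a$, $a=c$ that you flag but do not carry out (in those cases the listed minimal values coincide and are attained by extra vectors, so the counting argument needs the sign convention to break the tie). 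So your proposal is a faithful reconstruction of the classical proof rather than a divergence from the paper; your closing remark that one would "cite it outright" is in fact precisely what the paper does, and if you wanted a complete write-up you would only need to supply the uniqueness case analysis, e.g.\ as in Theorem 2.8 of \cite{Cox}.
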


Note that for a fixed discriminant $\mathcal{D}<0$, there are only finitely many reduced forms. Therefore, the number of classes of primitive, positive definite forms of discriminant  $\mathcal{D} $ is finite. To see this, consider a reduced form $f(x,y)=ax^2+bxy+cy^2$. By definition of a reduced form, we have $b^2 \leq a^2, a \leq c$. This implies 
$$ -\mathcal{D}  = 4ac-b^2 \geq 4a^2 - a^2 = 3a^2, $$ and  
$$0 < a \leq \sqrt{{-\mathcal{D} } \over{3}}, \; \text{then} \; |b| \leq a \leq \sqrt{{-\mathcal{D} } \over{3}} \;.$$ Hence there are only finitely many of choices for the integers $a, b$ and $c$. For example we have the following.

\begin{lemma} \label{reduced} There are exactly two reduced forms of discriminant  $\mathcal{D} =-48$; namely, $3x^2 + 4y^2$ and $x^2+12y^2$.
\end{lemma}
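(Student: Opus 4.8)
The plan is to apply Lagrange's Theorem: every primitive positive definite form of discriminant $\mathcal{D} = -48$ is properly equivalent to a unique reduced form, so it suffices to enumerate all reduced forms of that discriminant and check primitivity. First I would use the bound derived just above the statement: for a reduced form $ax^2 + bxy + cy^2$ of discriminant $\mathcal{D} = -48$ we have $0 < a \leq \sqrt{-\mathcal{D}/3} = \sqrt{16} = 4$ and $|b| \leq a$, so $a \in \{1,2,3,4\}$ and $|b| \leq a$. For each such pair $(a,b)$, the relation $b^2 - 4ac = -48$ forces $c = (b^2 + 48)/(4a)$, so I would simply run through the finitely many candidates.

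Carrying this out: for $a = 1$ we need $b = 0$ (since $|b| \le a$ and $b \ge 0$ when $|b| = a$), giving $c = 12$, the form $x^2 + 12y^2$. For $a = 2$ we need $4a = 8 \mid b^2 + 48$; with $|b| \le 2$, $b \in \{0, \pm 1, \pm 2\}$, and $b^2 + 48 \in \{48, 49, 52\}$, none divisible by $8$, so no form arises. For $a = 3$ we need $12 \mid b^2 + 48$, i.e. $12 \mid b^2$; with $|b| \le 3$ this forces $b = 0$, giving $c = 4$, the form $3x^2 + 4y^2$ (note $|b| = 0 < a < c$, so the reducedness sign conditions are vacuous). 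For $a = 4$ we need $16 \mid b^2 + 48$, i.e. $16 \mid b^2$; with $|b| \le 4$ this forces $b = 0$, giving $c = 3$, but then $c < a$, violating $a \le c$, so this is not reduced (and it is in fact the non-reduced presentation of $3x^2+4y^2$). Hence the only reduced forms are $x^2 + 12y^2$ and $3x^2 + 4y^2$.

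Finally I would check that both of these are primitive: $\gcd(1, 0, 12) = 1$ and $\gcd(3, 0, 4) = 1$, so both are genuine primitive positive definite forms, and they are evidently not properly equivalent since they are distinct reduced forms. This establishes that there are exactly two. I do not anticipate a real obstacle here — the argument is a short finite search controlled entirely by the bound $a \le \sqrt{-\mathcal{D}/3}$ already established in the text; the only thing to be careful about is handling the $a = 4$ case correctly (recognizing that $4x^2 + 3y^2$ is the same class as $3x^2 + 4y^2$ but is not in reduced form) and not double-counting.
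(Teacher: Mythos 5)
Your overall strategy is exactly the paper's: the paper's proof is just the bound $|b|\leq a\leq\sqrt{-\mathcal{D}/3}=4$ followed by "an explicit finite search," and you carry out that search. However, your execution of the search contains two concrete arithmetic errors. For $a=2$, $b=0$ you claim $8\nmid 48$, but $48=8\cdot 6$, so $c=6$ and the form $2x^2+6xy\cdot 0+6y^2=2x^2+6y^2$ does arise and satisfies the reduction inequalities $|b|\leq a\leq c$. For $a=4$ you claim $16\mid b^2$ with $|b|\leq 4$ forces $b=0$, but $b=4$ gives $16\mid 16$, hence $c=4$ and the form $4x^2+4xy+4y^2$, which also satisfies $|b|\leq a\leq c$ and $b\geq 0$. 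Neither of these is excluded by nonexistence, as your argument asserts; they must be excluded because they are \emph{imprimitive} ($\gcd(2,0,6)=2$ and $\gcd(4,4,4)=4$), and the paper's definition of a reduced form applies only to primitive positive definite forms. You do check primitivity at the end, but only for the two forms you found, so as written your case analysis would pass even if the imprimitivity filter were never invoked — the conclusion is saved only by correcting the two divisibility claims and then appealing to primitivity. There is also a small slip at $a=1$: the candidate $b=+1$ is not ruled out by the sign convention (that only excludes $b=-1$); it is ruled out because $c=49/4$ is not an integer (equivalently, $b^2\equiv -48\equiv 0\pmod 4$ forces $b$ even). With these corrections the finite search is complete and yields exactly $x^2+12y^2$ and $3x^2+4y^2$, matching the paper.
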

\begin{proof}
From the discussion  above, a reduced form $f(x,y)=ax^2+bxy+cy^2$ of discriminant  $\mathcal{D} =-48$ must satisfies $|b| \leq a \leq \sqrt{{48} \over{3}} = 4$, and $0 \leq c \leq 16$. An explicit finite search reveal that $3x^2 + 4y^2$ and $x^2+12y^2$ are the only posibilities.
\end{proof}
	 
We will need the following theorem of Gauss.

\begin{theorem} \label{numberofrep1} 
Let $m$ be a positive odd number relatively prime to $k > 1$. Then the number of ways that m is properly represented by a reduced form of discriminant $-4k$ is
 $$2 \prod_{p\mid m} \left(1+ \left({{-k}\over{p}}\right) \right),$$ where the product is over all distinct positive prime divisors $p$ of $m$. 
\end{theorem}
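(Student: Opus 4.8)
The plan is to prove Theorem \ref{numberofrep1} by translating the counting problem about proper representations by \emph{some} reduced form of discriminant $-4k$ into a counting problem about solutions of a congruence, and then evaluating that count via the multiplicativity of the Legendre symbol (Proposition \ref{legendre}(1)). First I would recall the classical correspondence between proper representations and roots of the discriminant modulo $m$: for a squarefree-coprime-to-$2$ modulus, an integer $m$ coprime to $\mathcal{D}=-4k$ is properly represented by some primitive form of discriminant $\mathcal{D}$ if and only if $\mathcal{D}$ is a quadratic residue mod $m$, i.e. there exists $b$ with $b^2\equiv \mathcal{D}\pmod{4m}$; and moreover the proper equivalence classes of forms, together with the choice of such a root $b$ (taken mod $2m$, say), are in bijection with the proper representations up to the natural action of $\mathrm{SL}_2(\mathbb{Z})$-stabilizers. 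Concretely: given a proper representation $m=f(x_0,y_0)$, one changes basis so that $f\sim mx^2+bxy+cy^2$ for some $b$ determined mod $2m$ with $b^2-4mc=\mathcal{D}$, and conversely each such $b$ yields a form properly representing $m$ by $(1,0)$. So the total count (over all reduced forms, which is a set of representatives for the classes) equals the number of residues $b$ mod $2m$ with $b^2\equiv -4k\pmod{4m}$, and the factor $2$ in the statement will come from the pair $\pm b$ together with the orientation/sign bookkeeping built into the definition of reduced form.

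Next I would reduce the congruence count to a product over primes. Since $m$ is odd and coprime to $k$, the Chinese Remainder Theorem lets me factor the number of solutions $b$ of $b^2 \equiv -4k \pmod{4m}$ as the product over prime powers $p^e \| m$ of the number of solutions mod $p^e$ (the mod-$4$ part is automatic since $-4k$ is $\equiv 0 \pmod 4$ and $m$ is odd, contributing a fixed factor that I would track carefully — this is where the leading $2$ really gets pinned down). Then I would invoke Hensel's lemma: for an odd prime $p\nmid k$ (hence $p\nmid -4k$), the number of solutions of $x^2\equiv -4k\pmod{p^e}$ equals the number mod $p$, which is $1+\left(\frac{-4k}{p}\right) = 1+\left(\frac{-k}{p}\right)$, using $\left(\frac{4}{p}\right)=1$ and multiplicativity. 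Taking the product over the distinct primes $p\mid m$ and reinstating the overall factor $2$ gives exactly $2\prod_{p\mid m}\left(1+\left(\frac{-k}{p}\right)\right)$.

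The main obstacle I anticipate is getting the normalization constant exactly right — i.e.\ making sure the count is $2\prod(\cdots)$ and not $\prod(\cdots)$ or $4\prod(\cdots)$. This requires being precise about three things simultaneously: (i) what "properly represented by a reduced form" means when the representing pair $(x,y)$ is allowed to vary (one must decide whether $(x,y)$ and $(-x,-y)$ count once or twice, and whether automorphs of the form identify representations), (ii) the passage from roots $b$ mod $4m$ to roots mod $2m$, and (iii) the fact that $-48\equiv 0\pmod 4$ forces the middle coefficient $b$ of any such form to be even, which restricts the residues being counted. The cleanest route is probably to fix the bijection at the level of pairs $(\text{class of form}, b \bmod 2m)$ where $b^2\equiv \mathcal{D}\pmod{4m}$ and $0\le b<2m$, verify it is well-defined and bijective onto (ordered, up to sign) proper representations, and then the factor $2$ is simply the statement that $b$ and $2m-b$ both occur and give genuinely distinct data except in degenerate cases that don't arise here since $m$ is odd and $\gcd(m,2k)=1$. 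Since the paper only ever applies this theorem with the specific discriminant $-48$ (so $k=12$), I would, if a fully general clean statement proves delicate, restrict attention to the case needed and cite \cite{Cox} for the general formula.
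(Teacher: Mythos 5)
The paper does not actually prove this statement: it is quoted as a classical theorem of Gauss, with \cite{Cox} (and \cite{lang}) cited at the start of Section \ref{quadraticsect} for details. So there is no in-paper argument to compare against; what you have written is a sketch of the standard textbook proof (representations correspond to square roots of the discriminant, then CRT and Hensel give the local product), and that skeleton is the right one. Your reduction of the congruence count is fine: writing $b=2b'$ turns $b^2\equiv-4k\pmod{4m}$ into $b'^2\equiv-k\pmod m$, and for each odd prime $p\mid m$ with $p\nmid k$ Hensel gives $1+\left(\frac{-k}{p}\right)$ solutions modulo $p^e$, so the number of admissible $b$ modulo $2m$ is exactly $\prod_{p\mid m}\bigl(1+\left(\frac{-k}{p}\right)\bigr)$, with no extra factor coming from the modulus $4$.

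The one genuine soft spot is precisely the normalization you flagged, and your proposed resolution of it is the wrong one. The leading $2$ does not come from the pair $b$ and $2m-b$: both of those residues are already counted inside $\prod_{p\mid m}\bigl(1+\left(\frac{-k}{p}\right)\bigr)$, so attributing the $2$ to $\pm b$ would double count. The correct source is the automorphism group of a primitive positive definite form of discriminant $-4k<-4$, which is $\{\pm I\}$ because $k>1$ (this hypothesis is exactly what excludes the discriminants $-3$ and $-4$ with $6$ and $4$ automorphs). In the correspondence you set up, a proper representation $(x_0,y_0)$ of $m$ by $f$ determines $b$ modulo $2m$ via $f\sim mx^2+bxy+cy^2$, and the representations mapping to a fixed $b$ form a single orbit $\{(x_0,y_0),(-x_0,-y_0)\}$ under the automorphs; hence the number of proper representations by a fixed class equals $2$ times the number of roots $b$ attached to that class, and summing over the (reduced representatives of the) classes gives $2\prod_{p\mid m}\bigl(1+\left(\frac{-k}{p}\right)\bigr)$. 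With the factor $2$ accounted for this way, your argument closes up correctly; alternatively, since the paper treats the statement as known, simply citing \cite{Cox} as the paper does is also acceptable.
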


This theorem allows us to prove the following.

\begin{theorem}\label{numberofrep2}
For a given positive integer $m \equiv 7 \pmod{12}$ with all prime divisors congruent to  $1$ or $7 \pmod{12}$, the number of proper representations of $m$ by the primitive positive form $3x^2 + 4y^2$ is $2^{\tau(m) + 1}$ where $\tau (m)$ is the number of positive prime divisors of $m$.
\end{theorem}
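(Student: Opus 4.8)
The plan is to combine Gauss's formula (Theorem \ref{numberofrep1}) with Lemma \ref{reduced}. Since $\mathcal{D} = -48 = -4k$ with $k = 12$, the two reduced forms of discriminant $-48$ are, by Lemma \ref{reduced}, precisely $3x^2 + 4y^2$ and $x^2 + 12y^2$. The hypothesis $m \equiv 7 \pmod{12}$ guarantees $m$ is odd and $\gcd(m, 12) = 1$, so Theorem \ref{numberofrep1} applies and tells us that the \emph{total} number of proper representations of $m$ by \emph{both} reduced forms together is
$$
2 \prod_{p \mid m} \left(1 + \left(\tfrac{-12}{p}\right)\right).
$$
First I would evaluate the Legendre symbol $\left(\tfrac{-12}{p}\right)$ for a prime $p \mid m$. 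Using complete multiplicativity (Proposition \ref{legendre}(1)), $\left(\tfrac{-12}{p}\right) = \left(\tfrac{-1}{p}\right)\left(\tfrac{3}{p}\right)$ since $\left(\tfrac{4}{p}\right) = 1$. The hypothesis says every prime divisor $p$ of $m$ is $\equiv 1$ or $7 \pmod{12}$; in particular $p > 3$. If $p \equiv 1 \pmod{12}$ then $p \equiv 1 \pmod 4$, so $\left(\tfrac{-1}{p}\right) = 1$ by Proposition \ref{legendre}(3), and $\left(\tfrac{3}{p}\right) = 1$ by Lemma \ref{three}; hence $\left(\tfrac{-12}{p}\right) = 1$. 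If $p \equiv 7 \pmod{12}$ then $p \equiv 3 \pmod 4$, so $\left(\tfrac{-1}{p}\right) = -1$, while $p \equiv 7 \pmod{12}$ gives $\left(\tfrac{3}{p}\right) = -1$ by Lemma \ref{three}; hence $\left(\tfrac{-12}{p}\right) = (-1)(-1) = 1$ again. So in every case each factor is $1 + 1 = 2$, and the total count of proper representations by the two forms is $2 \cdot 2^{\tau(m)} = 2^{\tau(m)+1}$.

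It remains to show that \emph{all} of these proper representations are by $3x^2 + 4y^2$, i.e.\ that $m$ has no proper representation at all by $x^2 + 12y^2$. This is where the precise residue $m \equiv 7 \pmod{12}$ (rather than merely $\gcd(m,12)=1$) gets used: reducing $x^2 + 12y^2 \pmod{12}$ (or even $\pmod 3$ and $\pmod 4$ separately) shows $x^2 + 12y^2 \equiv x^2 \pmod{12}$, and a square is congruent to $0, 1, 4,$ or $9 \pmod{12}$, never $7$. Hence $x^2 + 12y^2 = m$ has no integer solutions whatsoever, proper or not. Therefore every one of the $2^{\tau(m)+1}$ proper representations counted by Gauss's formula must come from the other reduced form $3x^2 + 4y^2$, which gives the claimed count.

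The main obstacle, such as it is, lies in the bookkeeping of the last step: one must be sure that Theorem \ref{numberofrep1} really does count representations by a complete list of reduced forms of the given discriminant (so that ruling out $x^2 + 12y^2$ forces everything onto $3x^2+4y^2$), and one must confirm that $3x^2+4y^2$ is itself primitive and reduced so that ``proper representation by $3x^2+4y^2$'' is the right notion — both of which are immediate from Lemma \ref{reduced} and the definition of reduced form. The only genuinely computational points are the four Legendre-symbol evaluations above and the observation that $7$ is a nonsquare mod $12$; neither presents any difficulty.
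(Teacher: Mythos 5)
Your proposal is correct and follows essentially the same route as the paper: apply Gauss's count (Theorem \ref{numberofrep1}) over the two reduced forms of discriminant $-48$ from Lemma \ref{reduced}, evaluate $\left(\tfrac{-12}{p}\right)=1$ for primes $p\equiv 1,7 \pmod{12}$ via Proposition \ref{legendre} and Lemma \ref{three}, and rule out $x^2+12y^2$ by a congruence obstruction (the paper notes $3$ is not a square mod $4$, you note $7$ is not a square mod $12$ — the same observation). No gaps.
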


\begin{proof}
Let us first investigate the properties of $m$.  Observe that $m \equiv 7 \pmod{12}$ implies that $m$ is odd and $m \equiv 3\pmod{4}$. Since 3 is not a square $\pmod{4}$, $m$ cannot be properly represented by the form  $x^2+12y^2$. 

With the given conditions on the divisors of  $m$, we can write 
$$m = {p_{1}}^{\alpha_{1}}\cdots{p_{s}}^{\alpha_{s}}{q_{1}}^{\beta_{1}}\cdots{q_{t}}^{\beta_{t}}$$ where $ {p_{1}},... ,{p_{s}}$ are distinct positive primes congruent to $1 \pmod{12}$  and ${q_{1}},... ,{q_{t}}$ are distinct positive primes congruent to $7 \pmod{12}$. Then
 $$ 7 \equiv m \equiv 7^{\beta_{1} + \cdots + \beta_{t}} \pmod{12}$$ 
 and it follows that 
 $${\beta_{1} + \cdots + \beta_{t}  \equiv 1 \pmod{2}}.$$

Now we consider the proper equivalence classes of the fixed discriminant $-48$. By Lemma \ref{reduced}, there are exactly 2 classes of primitive, positive definite reduced forms $3x^2 + 4y^2$ and $x^2+12y^2$. However, as noted above, $m$ cannot be represented by the latter form. Therefore, appealing to Theorem \ref{numberofrep1},  Proposition \ref{legendre} and Lemma \ref{three}, the number of ways that $m$ is properly represented by a reduced form  $3x^2 + 4y^2$ is
\begin{eqnarray*}
 2 \prod_{p\mid m} \left(1+ \left({{-12}\over{p}}\right) \right)
 &=&2 \prod_{p\mid m} \left(1+ \left({{-3}\over{p}}\right)\left({{4}\over{p}}\right) \right) \\
&=& 2 \prod^{s}_{i=1} \left(1+ \left({{-3}\over{p_{i}}}\right) \right) \prod^{t}_{i=1} \left(1+ \left({{-3}\over{q_{i}}}\right) \right)\\
  &=& 2 \prod^{s}_{i=1} \left(1+ \left({{-1}\over{p_{i}}}\right)\left({{3}\over{p_{i}}}\right) \right) \prod^{t}_{i=1} \left(1+ \left({{-1}\over{q_{i}}}\right) \left({{3}\over{q_{i}}}\right) \right)\\
   &=& 2 \prod^{s}_{i=1} \left(1+ (1)(1)\right) \prod^{t}_{i=1} \left(1+ (-1)(-1)\right)\\
 &=& 2^{(s+t)+1} = 2^{\tau(m) + 1}.
\end{eqnarray*}
\end{proof}

 As a consequence, we have the following corollary.

\begin{corollary}\label{numberofrep3}
Let $N=4m$ for some positive integer $m$ such that $m \equiv 7 \pmod{12}$ and all prime divisors of $m$ are congruent to $1$ or $7$ $\pmod{12}$. The number of ways to properly represent $N$ in the form $N = p^{2}+12q^2$, where $4 \mid p$ and $ 3 \nmid p$, is exactly $2^{\tau(N)}$.
\end{corollary}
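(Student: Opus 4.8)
The plan is to reduce the count to Theorem~\ref{numberofrep2} by constructing an explicit bijection between the set of proper representations of $m$ by $3x^2+4y^2$ and the set of proper representations of $N=4m$ by $x^2+12y^2$ that satisfy $4\mid p$ and $3\nmid p$.

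First I would extract two elementary facts from the hypothesis $m\equiv 7\pmod{12}$. If $m=3a^2+4b^2$ with $\gcd(a,b)=1$, then reducing mod $4$ gives $3a^2\equiv 3\pmod 4$, so $a$ is odd; and reducing mod $3$ gives $b^2\equiv 1\pmod 3$, so $3\nmid b$. Also, since $m$ is odd, the prime divisors of $N=4m$ are precisely $2$ together with the prime divisors of $m$, whence $\tau(N)=\tau(m)+1$.

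Next I would define the correspondence. To a proper representation $m=3a^2+4b^2$ I associate $(p,q)=(4b,a)$; then $p^2+12q^2=16b^2+12a^2=4(3a^2+4b^2)=N$, we have $4\mid p$ by construction, $3\nmid p$ since $3\nmid b$, and $\gcd(p,q)=\gcd(4b,a)=\gcd(b,a)=1$ because $a$ is odd, so $(p,q)$ is a proper representation of $N$ of the required form. In the opposite direction, to a proper representation $N=p^2+12q^2$ with $4\mid p$ I write $p=4b$ and divide $16b^2+12q^2=4m$ by $4$ to get $m=3q^2+4b^2$; here $\gcd(q,b)=1$ follows from $\gcd(p,q)=\gcd(4b,q)=1$, so $(a,b)=(q,b)$ is a proper representation of $m$ by $3x^2+4y^2$. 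These two assignments are mutually inverse, and both sides consist of ordered pairs of integers, so they give the desired bijection. (One may note in passing that the side conditions $4\mid p$ and $3\nmid p$ are in fact automatic for any proper representation $N=p^2+12q^2$, using $m\equiv 3\pmod 4$ and $m\equiv 1\pmod 3$ respectively; but we do not need this.)

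Combining the bijection with Theorem~\ref{numberofrep2}, which gives $2^{\tau(m)+1}$ proper representations of $m$ by $3x^2+4y^2$, we conclude that $N$ has exactly $2^{\tau(m)+1}=2^{\tau(N)}$ proper representations of the stated form. I do not expect a real obstacle here; the only delicate point is verifying that ``proper'' (i.e.\ $\gcd=1$) and the side conditions $4\mid p$, $3\nmid p$ are preserved under the correspondence, and this is exactly where the hypothesis $m\equiv 7\pmod{12}$ enters, through $a$ being odd and $3\nmid b$.
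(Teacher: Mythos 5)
Your proposal is correct and follows essentially the same route as the paper: the substitution $p=4b$ (the paper writes $p=4r$) giving a bijection between proper representations of $N=p^2+12q^2$ satisfying the side conditions and proper representations of $m=3q^2+4b^2$, followed by an appeal to Theorem~\ref{numberofrep2} and the observation $\tau(N)=\tau(m)+1$. Your write-up merely supplies the verifications (properness preserved, $4\mid p$, $3\nmid p$) that the paper leaves implicit.
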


\begin{proof} Writing $p = 4r$ sets up to bijection between the proper representations of $N = p^{2}+12q^2$ and $ m = 3q^2 + 4r^2$. So it suffices to prove that the number of ways to properly represent $ m = 3q^2 + 4r^2$ is $2^{\tau(N)} = 2^{\tau(m) + 1} $.

Applying Theorem \ref{numberofrep2}, the number of ways to represent $ m = 3q^2 + 4r^2$ is exactly $2^{\tau(m) +1}$, as  required.
\end{proof}

\section{Compression}

In this section we prove 

\begin{theorem} \label{semimain}
Given any positive integer $n$,  there exist $n$ distinct slopes $\alpha_{1}, ... , \alpha_{n}$ in the $\partial M_{8}$ and infinitely many closed, orientable, immersed, incompressible  surfaces $S_{D_{k}}\looparrowright M_{8}$ with no incompressible annulus joining $S_{D_{k}}$ and $\partial M_{8}$ which  compress in $M_{8}(\alpha_{i})$ for all  $i=1, ... ,n$ and positive integer $k$.
\end{theorem}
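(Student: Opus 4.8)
The goal is to combine the number-theoretic input of Section~\ref{quadraticsect} with Theorem~\ref{modify}. The key observation recorded after Theorem~\ref{modify} is that $D_k(p,q)$ depends on $(p,q)$ only through the quantity $p^2 + 12q^2$. So it suffices to exhibit, for a prescribed $n$, an integer $N$ admitting at least $n$ \emph{distinct} (as unordered pairs, or rather as slopes $p/q$) proper representations $N = p_i^2 + 12 q_i^2$ with $4 \mid p_i$ and $3 \nmid p_i$; for then all the surfaces $S_{D_k(p_1,q_1)} = S_{D_k(p_i,q_i)}$ simultaneously compress in each $M_8(p_i/q_i)$, and distinct representations give distinct slopes $\alpha_i = p_i/q_i$.

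\textbf{Step 1: produce $N$ with many representations.} Invoke Corollary~\ref{numberofrep3}: if $N = 4m$ with $m \equiv 7 \pmod{12}$ and every prime divisor of $m$ congruent to $1$ or $7 \pmod{12}$, then the number of proper representations of $N$ in the form $p^2 + 12q^2$ with $4\mid p$, $3\nmid p$ is exactly $2^{\tau(N)}$, where $\tau(N)$ is the number of distinct prime divisors. So I would first fix enough distinct primes $q_1, \dots, q_t$ with each $q_j \equiv 7 \pmod{12}$ (there are infinitely many by Dirichlet) and $t$ odd, so that $m = q_1 \cdots q_t$ satisfies $m \equiv 7^t \equiv 7 \pmod{12}$; then $\tau(m) = t$ and $\tau(N) = \tau(4m) = t$ (note $2 \nmid m$ since $m$ is odd). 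Choosing $t$ with $2^t \geq n$ (and $t$ odd) gives $N = 4m$ with at least $n$ proper representations of the required shape. The resulting pairs $(p_i, q_i)$ can be taken with $p_i, q_i > 0$, and distinct proper representations yield pairwise distinct slopes $p_i/q_i$; discard all but $n$ of them to obtain $\alpha_1, \dots, \alpha_n$.

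\textbf{Step 2: feed into Theorem~\ref{modify}.} Each chosen pair $(p_i,q_i)$ satisfies $4\mid p_i$, $3\nmid p_i$, and $\gcd(p_i,q_i)=1$ (properness of the representation, together with $3\nmid p_i$ and — one checks — $3 \nmid q_i$ since $3 \nmid N$, so no common factor is introduced). Since $p_i^2 + 12 q_i^2 = N$ is independent of $i$, the formulas defining $n_k$ and $D_k$ give $D_k(p_i,q_i) = D_k(p_j,q_j)$ for all $i,j$ and all $k \geq 1$; call this common value $D_k$. By Theorem~\ref{comm} and the construction preceding Theorem~\ref{modify}, $\{S_{D_k} \looparrowright M_8\}_{k=1}^\infty$ is a sequence of closed, orientable, immersed, totally geodesic (hence incompressible) surfaces that are acylindrical — in particular there is no incompressible annulus joining $S_{D_k}$ to $\partial M_8$. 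By Theorem~\ref{modify}, $S_{D_k}$ compresses in $M_8(p_i/q_i) = M_8(\alpha_i)$ for every $i = 1, \dots, n$ and every $k$. This is exactly the assertion of Theorem~\ref{semimain}.

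\textbf{Main obstacle.} The substantive content is entirely in Step~1 — arranging an integer with arbitrarily many proper representations by $p^2 + 12q^2$ subject to the divisibility constraints $4\mid p$, $3\nmid p$ — and this has been reduced by Corollary~\ref{numberofrep3} to a counting statement, so the remaining work is just the careful bookkeeping: verifying $m \equiv 7 \pmod{12}$ (needs $t$ odd), that all the hypotheses of the corollary hold, that $2^{\tau(N)} \geq n$ can be arranged, and that distinct proper representations really do give distinct slopes (two representations $(p,q)$ and $(p',q')$ with $p/q = p'/q'$ and $\gcd(p,q) = \gcd(p',q') = 1$ and $p,p',q,q' > 0$ must coincide). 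A minor point to check is that the representations we keep have $q_i \neq 0$ and $p_i \neq 0$ so that $\alpha_i$ is a genuine finite nonzero slope; this is automatic since $N$ is divisible by $3$ (ruling out $q=0$) and, with $m>1$ having a prime factor $\equiv 7$, is not itself of the form $12q^2$ alone in a degenerate way — in any case $4 \mid p$ and $3 \nmid p$ forces $p \neq 0$. None of this requires new ideas beyond what Section~\ref{quadraticsect} supplies.
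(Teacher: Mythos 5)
Your plan is exactly the paper's own route: Dirichlet's theorem supplies an odd number of primes congruent to $7 \pmod{12}$, Corollary \ref{numberofrep3} counts proper representations of $N=4m$ by $p^2+12q^2$ with $4\mid p$, $3\nmid p$ (the paper packages this step as Lemma \ref{anyn}), and the observation that $D_k(p,q)$ depends only on $p^2+12q^2$ feeds the resulting slopes into Theorem \ref{modify}, with Theorem \ref{comm} and the construction of Section \ref{surface} supplying acylindricity and the fact that the $S_{D_k}$ are infinitely many pairwise non-commensurable surfaces.

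The one place your bookkeeping goes wrong is the count in Step 1. First, $\tau(N)=\tau(4m)=t+1$, not $t$, since $2$ is a prime divisor of $N$; this slip is harmless because it only underestimates. More substantively, the proper representations counted by Theorem \ref{numberofrep1} and hence by Corollary \ref{numberofrep3} are integer solutions $(p,q)$ with all four sign choices $(\pm p,\pm q)$ counted separately, so normalizing to $p_i,q_i>0$ divides the count by $4$: from $2^{t+1}$ proper representations you get only $2^{t-1}$ positive coprime pairs, hence only $2^{t-1}$ slopes, which can fall short of $n$ under your choice $2^{t}\geq n$ (for $n=2$, $t=1$ you get a single positive pair). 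The repair is trivial and requires no new idea: take $t$ odd with $2^{t-1}\geq n$, or keep the negative slopes $-p/q$ as well, since $(p,-q)$ satisfies all the same hypotheses and yields the same $D_k$. (The paper's proof of Lemma \ref{anyn} is loose on the same point, and its $n=16$ example in fact uses an $m$ with five prime factors, consistent with the corrected count.) One further small remark: your parenthetical deduction ``$3\nmid q_i$ since $3\nmid N$'' is not valid (nothing prevents $3\mid q_i$ when $3\nmid p_i$), but it is also not needed, since $\gcd(p_i,q_i)=1$ is exactly what properness of the representation means.
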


From this, Theorem \ref{mainthm} easily holds.

\begin{proof}[Proof of Theorem \ref{mainthm}]
By assuming Theorem \ref{semimain}, this theorem immediately follows when we let $M=\overline M_{8}$ which is  a compact, orientable, irreducible 3-manifold with torus boundary. We note that $M(\alpha)=M_{8}(\alpha)  $ for any slope $\alpha$ in $\partial M$.

\end{proof}

To prove Theorem \ref{semimain} we first recall Dirichlet's Theorem on arithmetic progressions (see \cite {selberg} for more details).

 \begin{theorem}
 If positive numbers $s$ and $t$ are relatively prime, then there are infinitely many primes $p$ such that $p\equiv s \pmod{t}$.
 \end{theorem}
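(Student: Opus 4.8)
The plan is to prove this via the classical analytic approach through Dirichlet $L$-functions, treating $s$ and $t$ as relatively prime positive integers. First I would introduce the Dirichlet characters modulo $t$: the group homomorphisms $\chi \colon (\mathbb{Z}/t\mathbb{Z})^{*} \to \mathbb{C}^{*}$, extended to all of $\mathbb{Z}$ by setting $\chi(n) = 0$ whenever $\gcd(n,t) > 1$. There are exactly $\phi(t)$ of these, and the tool that lets me isolate the progression $s \pmod{t}$ is the orthogonality relation: for $\gcd(n,t)=1$,
$$\frac{1}{\phi(t)} \sum_{\chi} \overline{\chi(s)}\,\chi(n) = \begin{cases} 1 & n \equiv s \pmod{t}, \\ 0 & \text{otherwise}. \end{cases}$$

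Next, to each character I attach its $L$-function $L(z,\chi) = \sum_{n \ge 1} \chi(n) n^{-z}$, which converges for $\Re z > 1$ and, because $\chi$ is completely multiplicative, factors as an Euler product $\prod_p (1 - \chi(p)p^{-z})^{-1}$. Taking logarithms and expanding the geometric series gives $\log L(z,\chi) = \sum_p \chi(p) p^{-z} + g_\chi(z)$, where $g_\chi$ stays bounded as $z \to 1^{+}$ since the higher prime-power terms converge at $z=1$. Multiplying by $\overline{\chi(s)}$, summing over all $\chi$, and applying orthogonality then yields, as $z \to 1^{+}$,
$$\sum_{p \equiv s \,(t)} p^{-z} = \frac{1}{\phi(t)} \sum_{\chi} \overline{\chi(s)} \, \log L(z,\chi) + O(1).$$
The strategy is to show the left-hand side diverges as $z \to 1^{+}$, which is impossible if only finitely many primes lie in the progression.

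The principal character $\chi_0$ supplies the dominant term: $L(z,\chi_0)$ agrees with the Riemann zeta function $\zeta(z)$ up to a finite product over the primes dividing $t$, and since $\zeta(z)$ has a simple pole at $z = 1$, the contribution $\log L(z,\chi_0)$ blows up like $\log \frac{1}{z-1}$. For every non-principal $\chi$ I need $\log L(z,\chi)$ to remain bounded as $z \to 1^{+}$, which holds provided $L(z,\chi)$ extends continuously and without vanishing to a neighborhood of $z = 1$; the analytic continuation to $\Re z > 0$ is routine, so the whole argument reduces to the non-vanishing statement $L(1,\chi) \neq 0$.

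Securing $L(1,\chi) \neq 0$ for every non-principal $\chi$ is the heart of the matter and the step I expect to be the main obstacle. I would analyze the product $\prod_{\chi} L(z,\chi)$, whose Dirichlet coefficients are non-negative. For a complex character ($\chi \neq \overline{\chi}$), a zero at $z=1$ would be accompanied by a second zero from $\overline{\chi}$, overpowering the single simple pole of the $\chi_0$-factor and forcing the product to vanish at $z=1$, contradicting the growth forced by non-negativity of the coefficients. The genuinely delicate case is a \emph{real} non-principal character, where one zero exactly cancels the pole and this pairing gives no contradiction; here I would invoke the classical additional input, either Landau's theorem on Dirichlet series with non-negative coefficients applied to $\zeta(z)L(z,\chi)$, or Dirichlet's original route through the class number formula for binary quadratic forms. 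Once non-vanishing is established, only the $\chi_0$-term diverges, so $\sum_{p \equiv s\,(t)} p^{-z} \to \infty$ as $z \to 1^{+}$, and the progression must contain infinitely many primes.
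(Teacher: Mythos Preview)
Your outline is the classical analytic proof of Dirichlet's theorem and is correct as a sketch: orthogonality of characters, Euler products, the divergence coming from the principal character via the pole of $\zeta$, and the reduction to $L(1,\chi)\neq 0$ for non-principal $\chi$, with the real-character case handled by Landau's theorem or the class number formula. There is nothing wrong with the mathematics.

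However, there is nothing to compare it to: the paper does not prove this statement. It is quoted as background (``we first recall Dirichlet's Theorem on arithmetic progressions'') with a citation to the literature and is then used as a black box inside Lemma~\ref{anyn} to guarantee the existence of enough primes congruent to $7 \pmod{12}$. So your proposal is not an alternative to the paper's proof; it is a proof supplied where the paper deliberately offers none.
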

 
 Using this, we can prove 
 
\begin{lemma} \label{anyn}
For any given positive integer $n$, there exists a family of $n$ pairs $\{ (p_{i},q_{i}) \}_{i=1}^{n}$ such that $ p_{i}^{2}+12q_i^2 = p_{j}^{2}+12q_j^2$ for all $ i,j = 1,..., n $, where $p_{i}$  and $q_{i}$ are relatively prime, $4 \mid p_{i}$, but $ 3 \nmid p_{i}$ for all ${i = 1, ... , n}$.
\end{lemma}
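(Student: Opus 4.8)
The plan is to produce the $n$ pairs by finding a single integer $N$ that admits at least $n$ distinct proper representations of the required shape, and then reading the pairs off from those representations. By Corollary \ref{numberofrep3}, if $N = 4m$ with $m \equiv 7 \pmod{12}$ and every prime divisor of $m$ congruent to $1$ or $7 \pmod{12}$, then $N$ has exactly $2^{\tau(N)}$ proper representations of the form $N = p^2 + 12q^2$ with $4 \mid p$ and $3 \nmid p$. Since $2^{\tau(N)}$ can be made as large as we like by increasing the number of prime divisors of $m$, it suffices to exhibit, for each $n$, such an $m$ with $\tau(m)$ large enough that $2^{\tau(m)+1} \ge n$.

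The key steps, in order, are as follows. First, I would invoke Dirichlet's theorem: since $\gcd(7,12)=1$, there are infinitely many primes $p \equiv 7 \pmod{12}$; choose $t$ distinct such primes $q_1, \dots, q_t$ with $t$ large (say $2^{t} \ge n$, which more than suffices). Second, set $m = q_1 q_2 \cdots q_t$ if this is already $\equiv 7 \pmod{12}$; in general $m \equiv 7^{t} \pmod{12}$, and since $7^2 \equiv 1 \pmod{12}$ we get $m \equiv 7 \pmod{12}$ precisely when $t$ is odd, so I would simply require $t$ to be odd (replacing $t$ by $t+1$ if necessary). All prime divisors of $m$ are then $\equiv 7 \pmod{12}$, so the hypotheses of Corollary \ref{numberofrep3} are met with $N = 4m$. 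Third, apply Corollary \ref{numberofrep3} to conclude that $N = 4m$ has exactly $2^{\tau(N)} = 2^{t+1} \ge n$ distinct proper representations $N = p^2 + 12q^2$ with $4 \mid p$, $3 \nmid p$; each such representation is automatically \emph{primitive}, i.e.\ $\gcd(p,q) = 1$, by definition of proper representation. Picking $n$ of these representations gives the family $\{(p_i, q_i)\}_{i=1}^n$ with $p_i^2 + 12 q_i^2 = N$ for all $i$, and the conditions $\gcd(p_i,q_i)=1$, $4 \mid p_i$, $3 \nmid p_i$ hold by construction.

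The only genuinely delicate point is the bookkeeping with congruences modulo $12$: I must make sure $m \equiv 7 \pmod{12}$ (forcing an odd number of prime factors, all chosen $\equiv 7 \pmod{12}$) so that Corollary \ref{numberofrep3} applies, and I should double-check that distinct proper representations genuinely give distinct pairs $(p_i,q_i)$ — this is immediate since the representation data $(p,q)$ \emph{is} the pair. There is no real obstacle beyond this; the heavy lifting was already done in establishing Theorem \ref{numberofrep2} and Corollary \ref{numberofrep3}, and Dirichlet's theorem supplies the needed supply of primes in the arithmetic progression $7 \pmod{12}$.
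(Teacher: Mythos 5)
Your proposal is correct and takes essentially the same route as the paper: use Dirichlet's theorem to pick primes congruent to $7 \pmod{12}$, form $m$ with an odd number of such prime factors so that $m \equiv 7 \pmod{12}$, and apply Corollary \ref{numberofrep3} to $N=4m$ to obtain at least $n$ proper representations. The only cosmetic difference is that the paper allows prime powers with odd total exponent while you take $m$ squarefree with an odd number of distinct primes, which changes nothing essential.
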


\begin{proof} For any given positive integer $n$, there exists a positive integer $k$ such that $n \leq 2^k$. Define the integer $N$ by
$$N=4{a_{1}}^{\beta_{1}}\cdots{a_{k-1}}^{\beta_{k-1}},$$ 
where $ {a_{1}},... ,{a_{k-1}}$ are distinct  positive primes congruent to $7 \pmod{12}$ and $\beta_{1} + \cdots + \beta_{k-1} \equiv  1 \pmod{2}$. Since 7 and 12 are relatively prime, we know that such integer $N$ exists by Dirichlet's theorem. 

By construction, $N$ satisfies the hypothesis of  Collorary  \ref{numberofrep3}. Therefore, there exist $2^{\tau(N)}= 2^{k} \geq n$ pairs $(p, q)$ which properly represent $N$, and moreover these satisfy the conditions of the lemma.

\end{proof}

\begin{example} Using Mathematica for $n=16$, we have the family  

$\{(32, 813), (200, 811), (680, 789), (1112, 747), (1328, 717), (1528, 683), \\ (1640, 661), (1912, 597), (2032, 563), (2320, 461), (2560, 339), (2608, 307), \\ (2648, 277), (2720, 211), (2752, 173), (2792, 107)\}.$ 

Each such pairs $(p, q)$ are relatively prime, $4 \mid p$, $3 \nmid p$ and $p^{2}+12q^2= 7,932,652$. 
\end{example}

\begin{proof}[Proof of Theorem \ref{semimain}]
For any given $n$, by Lemma \ref{anyn} there exists a family  $\{ (p_{i},q_{i}) \}_{i=1}^{n}$ such that $ p_{i}^{2}+12q_i^2 = p_{j}^{2}+12q_j^2$ for all $ i,j = 1,..., n$, where $p_{i}$  and $q_{i}$ are relatively prime such that $4 \mid p_{i}$, but $ 3 \nmid p_{i}$ for all ${i = 1, ... , n}$. For each ${i = 1, ... , n}$, consider the slope $\alpha_{i} = {{p_{i}}\over {q_{i}}} $ on $\partial M_{8}$. As noted in Section \ref{surface}, $ D_{k}( p_{i}, q_{i}) = D_{k}( p_{j}, q_{j}) $ for all ${i, j = 1,... , n}$ and all $k > 0$, and we denote this simply as $D_{k}$. Theorem \ref{modify} implies $S_{D_{k}}$ compresses in $M_{8} (\alpha_{i})$ for all ${i = 1,... , n}$ and all $k > 0$. Since $D_{1} < D_{2} < ...$, Theorem \ref{comm} implies  $S_{D_{k}}$ are all non-commensurable.

\end{proof}

 \bibliographystyle{plain}
\bibliography{multitube}

\end{document}